\newtheorem{theorem}{Theorem}
\newtheorem{corollary}[theorem]{Corollary}
\newtheorem{lemma}[theorem]{Lemma}
\newtheorem{proposition}[theorem]{Proposition}
\newtheorem*{question}{Main Question}
\theoremstyle{remark}
\newtheorem*{remark}{Remark}
\newtheorem{example}{Example}
\begin{document}

\title{Matrix coefficients and Iwahori-Hecke algebra modules}
\date{July 10, 2015}
\author{Ben Brubaker}
\author{Daniel Bump}
\author{Solomon Friedberg}
\address{School of Mathematics, University of Minnesota, Minneapolis, MN 55455}
\email{brubaker@math.umn.edu}
\address{Department of Mathematics, Stanford University, Stanford, CA 94305}
\email{bump@sporadic.stanford.edu}
\address{Department of Mathematics, Boston College, Chestnut Hill, MA 02467-3806}
\email{solomon.friedberg@bc.edu}
\thanks{This work was supported by NSF grants 1406238 (Brubaker), 1001079 (Bump), and 
1500977 (Friedberg) and NSA grant H98230-13-1-0246 (Friedberg).}
\subjclass[2010]{Primary  22E50, 20C08; Secondary 22E35, 20G25}
\keywords{Hecke algebra, universal principal series, unique functional, Bessel functional, Casselman-Shalika formula}
\begin{abstract} We establish a connection between certain unique models, or equivalently
unique functionals, for representations of $p$-adic groups
and linear characters of their corresponding Hecke algebras.  This allows us to give a uniform evaluation of the
image of spherical and Iwahori-fixed vectors in the unramified principal series for
 this class of models. We provide an explicit alternator expression for the image of spherical vectors
 under these functionals in terms of representation theory of the dual group.
\end{abstract}
\maketitle

\section{Introduction and Statement of Results}

We will make a connection between certain unique models, or equivalently unique functionals, for
representations of split, $p$-adic groups and linear characters of their corresponding 
Iwahori-Hecke algebras.  This connection provides a unified approach for evaluating
the image of spherical and Iwahori-fixed vectors in unramified principal series  for
this class of models, which we show includes the spherical and Whittaker models on all split
reductive groups and the ``Bessel'' models for odd orthogonal groups of Piatetski-Shapiro 
and Novodvorsky.  In particular, we provide an
explicit expression for the image of the spherical vector under these functionals in terms
of the representation theory of the dual group, the proof of which requires a deformation of the 
equality between the Demazure character formula and the Weyl character formula. 

Before describing our results in generality, we illustrate them in the special case of $SO_{2n+1}$. 
In this case, there are four characters of
the associated Hecke algebra $\mathcal{H}$ and our results imply that the spherical, Whittaker,
and Bessel functionals are $\mathcal{H}$-module intertwiners from principal series to 
$\mathcal{H}$-modules induced from three of the four linear characters.  
For the fourth character, they
imply that there is a functional on Iwahori-fixed vectors of principal series on $SO_{2n+1}$ for which the image of
the spherical vector matches Sakellaridis' formula for the spherical function in the Shalika model \cite{sakellaridis}.
The Shalika functional is defined for representations of $GL_{2n}$, but is related to those of $SO_{2n+1}$ 
via functoriality. We discuss this example further and place it in a broader context at the end of this introduction.

The connection between models for representations and Iwahori-Hecke algebra modules is most elegantly stated in terms
of  ``universal principal series,'' which we shall define momentarily after introducing some notation.
Our treatment of Hecke algebras largely parallels that of Haines, Kottwitz, and
Prasad \cite{hkp} and further discussion of the objects in the next few paragraphs may be found there.

Let $G$ be a split, connected reductive group over a $p$-adic field $F$
with ring of integers $\mathcal{O}$ and maximal ideal $(\pi)$. Let $k$ be the residue
field $\mathcal{O} / (\pi)$ with cardinality $q$. Given split, maximal torus $T$ in $G$ and
a Borel subgroup $B = TU$, all of which are defined over $\mathcal{O}$, we define the
Iwahori subgroup $I$ to be the inverse image of $B(k)$ in the canonical projection
$G(\mathcal{O}) \longrightarrow G(k)$. We commonly write $G$ for $G(F)$, and similarly
for other subgroups, when no confusion may arise.

We denote by $\mathcal{H}$ the corresponding Iwahori-Hecke algebra of functions $C_c(I \backslash G / I)$
where multiplication is given by convolution of functions with respect to a Haar measure on $G$ giving $I$
measure 1. 

Define the right $\mathcal{H}$-module $\mathcal{M} := C_c(T(\mathcal{O}) U \backslash G / I)$, which we refer
to as the ``universal principal series,'' a moniker we shall now justify. Let $R := C_c(T / T(\mathcal{O})) =
\mathbb{C}[P^\vee]$, the group ring of the coweight lattice $P^\vee$ of $G$. To any coweight $\mu$, we
write $\pi^\mu$ for the element $\mu(\pi) \in T(F)$. There's a tautological (or ``universal'') character 
$$ \chi_{\text{univ}} \, : T / T(\mathcal{O}) \longrightarrow R^\times \quad \text{mapping} \quad \pi^\mu \longmapsto \pi^\mu. $$
Then viewing $R$ as the space for the character $\chi_{\text{univ}}^{-1}$, there's a left $\mathcal{H}$-module isomorphism
\begin{eqnarray*} \mathcal{M} & \simeq & i_B^G(\chi_\text{univ}^{-1})^I \\
\eta \, : \, \varphi(g) & \longmapsto & \sum_{\mu \in P^\vee}  \delta_B(\pi^\mu)^{-1/2} \pi^\mu \varphi(\pi^\mu g). \end{eqnarray*}
Here $i_B^G(\chi_\text{univ}^{-1})$ denotes the induced module obtained by normalized induction of $\chi_{\text{univ}}^{-1}$ and $i_B^G(\chi_\text{univ}^{-1})^I$ denotes the subspace of $I$-fixed vectors. We use the term ``universal principal series'' to describe the resulting $\mathcal{H}$-modules because an unramified principal series may be obtained from any choice of unramified quasicharacter $\chi \, : \, T / T(\mathcal{O}) \longrightarrow \mathbb{C}$. Indeed $\chi$ determines a $\mathbb{C}$-algebra homomorphism $R \longrightarrow \mathbb{C}$ by $\pi^\mu \longmapsto \chi(\pi^\mu)$ and $\mathbb{C} \otimes_{R} \mathcal{M} \simeq i_B^G(\chi^{-1})^I$.

On the other hand, it is known that $\mathcal{M}$ is isomorphic to $\mathcal{H}$ as a free, rank one right $\mathcal{H}$-module. Explicitly,
the module $\mathcal{M}$ has a $\mathbb{C}$ basis consisting of characteristic functions $v_w :=  1_{T(\mathcal{O}) U w I}$ with
$w \in \tilde{W}$, the extended affine Weyl group. The module isomorphism from $\mathcal{H}$ to $\mathcal{M}$ is given
by sending $h \longmapsto 1_{T(\mathcal{O}) U I} \ast h$. Thus we have a left action of $\mathcal{H}$ on $\mathcal{M}$ via
\begin{equation} h' \, : \, 1_{T(\mathcal{O}) U I} \ast h \longmapsto 1_{T(\mathcal{O}) U I} \ast h' h. \label{leftaction} \end{equation} 
The map $\eta$ is constructed as an average on the left, so is clearly a right $\mathcal{H}$-module isomorphism. But we
may also transfer the left action on $\mathcal{M}$ defined by (\ref{leftaction}) to $i_B^G(R)^I$ via $\eta$. First note that
\begin{equation} \eta : v_1 := 1_{T(\mathcal{O})U I} \longmapsto \phi_1(g) := \begin{cases} \delta_B(\pi^\lambda)^{1/2} \pi^{-\lambda} & \text{if $g = \pi^\lambda t u j \in BI$} \\ 0 & \text{otherwise}, \end{cases} \label{phionedef} \end{equation}
where, in the first case, if $g \in BI$ we've written $g = bj$ with $j \in I$ and $b = \pi^\lambda t u$, and $\pi^\lambda \in P^\vee, t \in T(\mathcal{O}), u \in U$. Thus according to the isomorphism $\eta$, elements of $i_B^G(\chi_\text{univ}^{-1})^I$ are of the form $\phi_1 \ast h$ with $h \in \mathcal{H}$ and we may similarly define a left action of $\mathcal{H}$ on $i_B^G(\chi_\text{univ}^{-1})^I$ by 
$$ h' \, : \, \phi_1 \ast h \longmapsto \phi_1 \ast h' h. $$

Since the left action identifies $\mathcal{H}$ with $\text{End}_{\mathcal{H}}(\mathcal{M})$ and elements of $R$
may be viewed as endomorphisms of $\mathcal{M}$, there is an induced embedding of $R$ into $\mathcal{H}$.
Moreover, the finite Hecke algebra $\mathcal{H}_0 := C(I \backslash G(\mathcal{O}) / I)$ is a subalgebra of $\mathcal{H}$.
There is a vector space isomorphism:
$$ \mathcal{H} \simeq R \otimes_{\mathbb{C}} \mathcal{H}_0. $$
We often continue to denote elements of the embedded copy of $R$ by $\pi^\mu$ with $\mu \in P^\vee$.


We can now present a rough formulation of our main question. 

\begin{question}[Rough Form] Given a left $\mathcal{H}$-module $V$ and an element $\mathcal{F}_V \in \text{Hom}_{\mathcal{H}}(\mathcal{H}, V)$, give an explicit realization of the map $\mathcal{L}_V$ on functions in $i_B^G(\chi_{\text{\em{univ}}}^{-1})^I$ such that the following diagram of left $\mathcal{H}$-modules commutes:
\begin{equation} \label{firstdiagram}
\begin{tikzpicture}
  \matrix (m) [matrix of math nodes,row sep=3em,column sep=5em,minimum width=2em] {
     i_B^G(\chi_\text{\em{univ}}^{-1})^I & \\
     \mathcal{M} \simeq \mathcal{H} & V \\};
  \path[-stealth]
    (m-2-1) edge node [left] {$\eta$} (m-1-1)
    (m-1-1) edge [dashed] node [above] {$\mathcal{L}_V$} (m-2-2)
    (m-2-1) edge node [below] {$\mathcal{F}_V$} (m-2-2);
\end{tikzpicture}
\end{equation}
\end{question}

The question depends crucially on what is meant by ``explicit realization'' of $\mathcal{L}_V$; after all, $\eta$ is an isomorphism of $\mathcal{H}$-modules. Before stating what we intend precisely by ``explicit realization,'' we present two examples in the following results.

\begin{theorem} Let ``$\text{triv}$'' denote the character of $\mathcal{H}_0$ in which basis elements $1_{IwI}$ act by multiplication by $q^{\ell(w)}$, where $q$ is the cardinality of the residue field. If 
$$ V = \text{Ind}_{\mathcal{H}_0}^{\mathcal{H}}(\text{triv}),$$ 
then we may identify $V \simeq R$ as $\mathbb{C}$ vector spaces. Taking
$$ \mathcal{F}_V \, : \, h \longmapsto h \cdot 1 $$ 
then the diagram (\ref{firstdiagram}) commutes by taking $\mathcal{L}_V$ to be the $R$-valued ``spherical functional'' uniquely determined up to scalar by the condition that $\mathcal{L}_V (\phi(kg)) = \mathcal{L}_V (\phi(g))$ for all $k \in G(\mathcal{O})$.
\label{sphericalthm} \end{theorem}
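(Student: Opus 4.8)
The plan is to make the module $V$ explicit, to recognize $\mathcal{F}_V$, transported through $\eta$, as the representation-theoretic projection onto spherical vectors, and then to conclude by multiplicity one. Concretely, I would first replace the character $\text{triv}$ of $\mathcal{H}_0$ by the idempotent $e_0 := \mathrm{vol}(G(\mathcal{O}))^{-1}\,1_{G(\mathcal{O})}\in\mathcal{H}_0$ (normalizing $\mathrm{vol}(I)=1$), the projector onto $G(\mathcal{O})$-fixed vectors: one has $e_0^2=e_0$ and $1_{IwI}\ast e_0=q^{\ell(w)}e_0$, so $\mathcal{H}_0e_0=\mathbb{C}e_0\simeq\mathbb{C}_{\text{triv}}$ as left $\mathcal{H}_0$-modules. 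Hence $V=\mathrm{Ind}_{\mathcal{H}_0}^{\mathcal{H}}(\text{triv})=\mathcal{H}\otimes_{\mathcal{H}_0}\mathcal{H}_0e_0\simeq\mathcal{H}e_0$ as left $\mathcal{H}$-modules, with $\mathcal{F}_V$ identified with $h\mapsto he_0$; and, using $\mathcal{H}\simeq R\otimes_{\mathbb{C}}\mathcal{H}_0$ as a right $\mathcal{H}_0$-module together with $\mathcal{H}_0e_0=\mathbb{C}e_0$, the asserted vector-space isomorphism $V\simeq R$ is exactly $\mathcal{H}e_0=R\otimes\mathbb{C}e_0\simeq R$.

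Next I would transport this across $\mathcal{M}\simeq\mathcal{H}$ (sending $v_1\mapsto 1$) and then across $\eta$. On $\mathcal{M}$ the map $\mathcal{F}_V$ becomes right convolution by $e_0$, i.e. averaging over $G(\mathcal{O})$ on the right, with image $\mathcal{M}\ast e_0=C_c(T(\mathcal{O})U\backslash G/G(\mathcal{O}))\simeq\mathbb{C}[P^\vee]=R$. Since $\eta$ is a right $\mathcal{H}$-module isomorphism it commutes with $\ast e_0$, so $\mathcal{L}_V=\mathcal{F}_V\circ\eta^{-1}$ sends $\phi$ to $\eta^{-1}(\phi\ast e_0)$, where $(\phi\ast e_0)(g)=\mathrm{vol}(G(\mathcal{O}))^{-1}\int_{G(\mathcal{O})}\phi(gk)\,dk$ is the projection of $\phi$ onto the spherical line $i_B^G(\chi_{\text{univ}}^{-1})^{G(\mathcal{O})}$ and $\eta^{-1}$ merely reads off the $R$-coordinate of that line. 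This is the spherical functional. (The constant should be recorded: by (\ref{phionedef}), $\phi_1\ast e_0=\mathrm{vol}(G(\mathcal{O}))^{-1}\phi^{\circ}$ with $\phi^{\circ}$ the spherical vector normalized by $\phi^{\circ}(1)=1$, so the identification is exact up to the factor $[G(\mathcal{O}):I]=\mathrm{vol}(G(\mathcal{O}))$.)

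The two defining properties are then immediate. For invariance: $\phi\ast e_0$ is unchanged under translating $\phi$ by $G(\mathcal{O})$, by invariance of Haar measure on $G(\mathcal{O})$, which is precisely the condition $\mathcal{L}_V(\phi(kg))=\mathcal{L}_V(\phi(g))$ for $k\in G(\mathcal{O})$. For uniqueness up to scalar: if $\mathcal{L}$ is any $R$-valued functional with this invariance, then applying it to the finite average $\phi\ast e_0$ gives $\mathcal{L}(\phi)=\mathcal{L}(\phi\ast e_0)$, so $\mathcal{L}$ is determined by its restriction to the rank-one $R$-module $i_B^G(\chi_{\text{univ}}^{-1})^{G(\mathcal{O})}$; thus the space of such functionals is of rank one and the nonzero element $\mathcal{L}_V$ spans it, which is what ``uniquely determined up to scalar'' means. (Equivalently this is the classical fact that $\dim_{\mathbb{C}}i_B^G(\chi)^{G(\mathcal{O})}=1$ for generic unramified $\chi$, following from the Iwasawa decomposition $G=BG(\mathcal{O})$, in its universal $R$-coefficient form.)

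The conceptual content is short; the expected main obstacle is bookkeeping. One must reconcile the left $\mathcal{H}$-action of (\ref{leftaction}) on $\mathcal{M}$---equivalently left multiplication on $\mathcal{H}\simeq\mathcal{M}$---with the genuine smooth $G(\mathcal{O})$-action on $i_B^G(\chi_{\text{univ}}^{-1})$ used in the invariance condition, and keep careful track of the two $R$-module structures at play (those on $\mathcal{M}$ and on $i_B^G(\chi_{\text{univ}}^{-1})$ differ under $\eta$ by a $\delta_B^{1/2}$-twist) as well as of the normalizing powers of $q$, so that the identification of $\mathcal{F}_V\circ\eta^{-1}$ with the spherical functional holds on the nose up to the single scalar permitted by the statement.
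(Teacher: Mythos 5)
Your proof is correct, but it takes a genuinely different route from the paper's. You define $\mathcal{L}_V := \mathcal{F}_V\circ\eta^{-1}$, identify it structurally (via the idempotent $e_0 = \mathrm{vol}(G(\mathcal{O}))^{-1}1_{G(\mathcal{O})}$) as factoring through the projection $\phi\mapsto\phi\ast e_0$ onto $G(\mathcal{O})$-fixed vectors, and then invoke uniqueness of the $R$-valued $G(\mathcal{O})$-invariant functional to recognize $\mathcal{L}_V$ as the spherical functional. The paper instead checks commutativity directly on a generating set: it verifies the base case $\mathcal{S}(\phi_1)=1=\mathcal{F}_V(v_1)$ using $G(\mathcal{O})\cap BI=I$, and then verifies $\mathcal{S}(T_{s_\alpha}\cdot\phi)=T_{s_\alpha}\cdot\mathcal{S}(\phi)$ for each simple $\alpha$ by combining the Hecke-algebra expression (\ref{intertwinerashecke}) for $\mathcal{A}_{s_\alpha}$ with Casselman's formula (\ref{spherintertwiner}) and matching against the Bernstein-presentation action (\ref{signaction}) with $\varepsilon(T_{s_\alpha})=q$. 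Your idempotent argument is cleaner and shorter for this specific case, and it makes the role of the spherical projector transparent; but it exploits the compactness of $G(\mathcal{O})$ and has no analogue for Theorems~\ref{whittakerthm} and~\ref{besselthm}, where $\overline{U}$ (resp.\ $T_\psi(\mathcal{O})\bar U_P$) is not compact and there is no projecting idempotent in $\mathcal{H}$. The paper's generator-by-generator verification is chosen precisely because it runs uniformly through all three functionals once the analogues of (\ref{spherintertwiner}) are in hand. Two small points worth tightening in your write-up: first, $\rho(k)\phi$ for $k\in G(\mathcal{O})\setminus I$ leaves the space of $I$-fixed vectors, so the invariance claim should be phrased for the spherical functional on all of $i_B^G(\chi_{\mathrm{univ}}^{-1})$ and then restricted, as the paper's $\mathcal{S}=\int_{G(\mathcal{O})}\phi(k)\,dk$ is; second, the normalization is not merely ``up to a factor of $[G(\mathcal{O}):I]$'' --- with the paper's Haar measure (giving $I$ measure $1$) one has exactly $\mathcal{S}(\phi_1)=1$, and one should check this to conclude the diagram commutes on the nose rather than only up to a unit of $R$.
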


\begin{theorem} Let ``$\text{sign}$'' denote the character of $\mathcal{H}_0$ in which generators $1_{IwI}$ act by multiplication by $(-1)^{\ell(w)}$. If 
$$ V = \text{Ind}_{\mathcal{H}_0}^{\mathcal{H}}(\text{sign}), $$ 
again we identify $V \simeq R$ as $\mathbb{C}$ vector spaces. Taking
$$ \mathcal{F}_V \, : \, h \longmapsto h \cdot \pi^{\rho^\vee} $$
where $\rho^\vee$ is the half sum of positive coroots,
then the diagram (\ref{firstdiagram}) commutes by taking $\mathcal{L}_V$ to be the $R$-valued ``Whittaker functional'' uniquely determined up to scalar by the condition $\mathcal{L}_V (\phi(ug)) = \psi(u) \mathcal{L}_V (\phi(g))$ for all $u \in \overline{U}$, the opposite of the unipotent $U$ in $B$. Here $\psi$ is a non-degenerate character of $\overline{U}$.
\label{whittakerthm} \end{theorem}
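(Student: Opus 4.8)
The plan is to exploit the cyclicity of the universal principal series to reduce the commutativity of (\ref{firstdiagram}) to a normalization check plus a Casselman--Shalika-type rank-one recursion. Since $\eta$ is a right $\mathcal{H}$-module isomorphism with $\eta(v_1) = \phi_1$, every vector of $i_B^G(\chi_{\text{univ}}^{-1})^I$ has the form $\phi_1 \ast h$ with $h \in \mathcal{H}$, and, identifying $\mathcal{M} \simeq \mathcal{H}$, the diagram commutes if and only if
\[
  \mathcal{L}_V(\phi_1 \ast h) \;=\; h \cdot \pi^{\rho^\vee} \qquad \text{for every } h \in \mathcal{H},
\]
where on the right $h$ acts through the left $\mathcal{H}$-module structure of $V = \text{Ind}_{\mathcal{H}_0}^{\mathcal{H}}(\text{sign})$. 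As the Whittaker functional exists and is unique up to scalar (as recalled in the statement), this amounts to two things: (a) the equivariance $\mathcal{L}_V(\phi_1 \ast h) = h \cdot \mathcal{L}_V(\phi_1)$ for all $h$, i.e.\ that $\mathcal{L}_V$ intertwines the left $\mathcal{H}$-action transferred along $\eta$ with the one on $V$; and (b) the computation $\mathcal{L}_V(\phi_1) = \pi^{\rho^\vee}$ for the appropriate normalization of $\mathcal{L}_V$, which pins down the free scalar. For (b) I would substitute the closed formula (\ref{phionedef}) for $\phi_1 = \eta(v_1)$ into the Jacquet-integral realization of $\mathcal{L}_V$ over $\overline{U}$; note that $\phi_1$ restricted to $\overline{U}$ is compactly supported, so this is an honest finite computation, and the exponent $\rho^\vee$ arises from the $\delta_B^{1/2}$ of normalized induction together with the way the support of $\phi_1$ meets the cell on which the integral is concentrated. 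This is pure bookkeeping.

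For (a), which carries the weight of the proof, I would use the Bernstein presentation $\mathcal{H} \cong R \otimes_{\mathbb{C}} \mathcal{H}_0$, with basis $\{ \pi^\mu T_w \}_{\mu \in P^\vee,\, w \in W}$, and establish the identity on generators. Convolution by an element of the embedded $R$ passes through $\mathcal{L}_V$ to ordinary multiplication in $R = \mathbb{C}[P^\vee]$ — a change of variables in the defining integral, in which the $\delta_B^{1/2}$-twists cancel against those built into $\eta$, matching the $R$-action on $V \simeq R$. The delicate case is a simple reflection $s_i$: one must prove the recursion $\mathcal{L}_V(\Psi \ast 1_{I s_i I}) = \mathcal{D}_i(\mathcal{L}_V(\Psi))$ for $\Psi$ of the form $\phi_1 \ast \pi^\mu T_{w'}$ with $\ell(s_i w') > \ell(w')$, where $\mathcal{D}_i$ is the operator on $R$ by which $1_{I s_i I}$ acts on $\text{Ind}_{\mathcal{H}_0}^{\mathcal{H}}(\text{sign})$. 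Computing $\mathcal{D}_i$ by moving $1_{I s_i I}$ past $R$ with the Bernstein relations — which produces the reflection $s_i$ acting on $R$ plus a correction valued in $R$ — and then applying the sign character yields a Demazure--Lusztig operator, a $q$-deformation of the divided-difference (Demazure) operator twisted by a sign; the left-hand side is precisely the Casselman--Shalika recursion for Iwahori-level Whittaker functions, obtained by the usual reduction to the rank-one ($SL_2$) intertwining-integral computation, performed here over the ring $R$. Expanding an arbitrary $h$ in the Bernstein basis and inducting on $\ell(w)$ then yields (a) in full. Matching the two operators is the crux, and it is here that the \emph{sign} character of $\mathcal{H}_0$ — rather than the trivial one, as for the spherical functional in Theorem~\ref{sphericalthm} — is forced.

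Granting (a) and (b), the maps $\mathcal{L}_V \circ \eta$ and $\mathcal{F}_V$ are two morphisms of left $\mathcal{H}$-modules out of the cyclic left module $\mathcal{M} \simeq \mathcal{H}$ that agree on its generator $v_1$, hence coincide, so (\ref{firstdiagram}) commutes. I expect the genuine obstacle to be step (a), and within it the $1_{I s_i I}$-equivariance: identifying the Hecke operator acting on Iwahori--Whittaker functions with the Demazure--Lusztig operator coming from $\text{Ind}_{\mathcal{H}_0}^{\mathcal{H}}(\text{sign})$ is, in essence, the analytic heart of the Casselman--Shalika method recast in the language of $\mathcal{H}$-modules.
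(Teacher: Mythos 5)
Your proposal follows essentially the same line as the paper: check the normalization $\mathcal{W}(\phi_1)=\pi^{\rho^\vee}$ on the cyclic generator, then verify $\mathcal{H}$-equivariance on the Bernstein generators $\pi^\mu T_{s_\alpha}$, with the $T_{s_\alpha}$ case as the crux and the module action on $V$ computed via the Bernstein relation (yielding the Demazure--Lusztig-type operator \eqref{signaction} with $\varepsilon(T_{s_\alpha})=-1$). The one thing you leave implicit is the specific algebraic pivot the paper uses to settle the crux: the identity \eqref{intertwinerashecke} expressing the normalized intertwiner as $\mathcal{A}_{s_\alpha}=(1-q^{-1})\pi^{\alpha^\vee}+q^{-1}(1-\pi^{\alpha^\vee})T_{s_\alpha}$, which together with the Casselman--Shalika functional equation $\mathcal{W}\circ\mathcal{A}_{s_\alpha}=(\pi^{\alpha^\vee}-q^{-1})(s_\alpha\circ\mathcal{W})$ (established by uniqueness of the Whittaker functional over the ring $R$, not just over $\mathbb{C}$, per Proposition~\ref{unique}) turns your ``rank-one $SL_2$ intertwining-integral computation'' into a one-line substitution; making that bridge explicit would complete your sketch.
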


These results will follow by reformulating results of \cite{bbf-rogawski} and \cite{bbl}, respectively, which were in turn
based on work of Casselman and Shalika (\cite{casselman, casselman-shalika}).

Based on these examples, we may refine the main question as follows.

\begin{question}[Refined Form] Given a linear character $\varepsilon$ of the finite Hecke algebra $\mathcal{H}_0$,
let $V = \text{Ind}_{\mathcal{H}_0}^{\mathcal{H}}(\varepsilon)$. Then, as $\mathbb{C}$-vector spaces, we have
$V \simeq R$. Does there exist a $0\neq v_\varepsilon \in R$, a subgroup $S$ of $G$, and a representation $\rho_S$ of $S$
such that the diagram in (\ref{firstdiagram}) commutes with $\mathcal{F}_V$ given by
$ \mathcal{F}_V \, : \, h \longmapsto h \cdot v_\varepsilon$
and  $\mathcal{L}_V$ satisfying $ \mathcal{L}_V (\phi(hg)) = \rho_S(h) \mathcal{L}_V (\phi(g))$ for all $s \in S$?
\end{question}

In brief, $\mathcal{L}_V$ is an intertwiner of the left $\mathcal{H}$ modules from $\mathcal{M} \simeq i_B^G(R)$ to $V$.

When $G$ is simply laced, then the trivial and sign characters are the only two linear characters of the corresponding finite Hecke algebra
$\mathcal{H}_0$ (see \eqref{q-relation} below) and the above two theorems exhaust these possibilities. When $G$ has roots of different length, there are two additional characters of the associated $\mathcal{H}_0$ which are determined by the following condition. For simple reflections $s$, the generator $T_s$ may act by multiplication by $-1$ or $q$ depending on the root length.

Given a character $\varepsilon$ of $\mathcal{H}_0$, we partition the set of positive roots $\Phi^+$ into $\Phi^+_{-1}(\varepsilon)$ and $\Phi^+_{q}(\varepsilon)$ whose members have the same length as those simple roots for which $\varepsilon$ acts by $-1$ or $q$, respectively. We define:
\begin{equation} \rho^\vee_\varepsilon := \frac{1}{2} \sum_{\alpha \in \Phi^+_{-1}(\varepsilon)} \alpha^\vee. \label{generalrhochicken}
\end{equation}
It is an element of $P^\vee$ according to Lemma~\ref{intheweightlattice}.

\begin{theorem} Let $G = SO_{2n+1}(F)$ and let $\varepsilon$ be the character of $\mathcal{H}_0$ that acts by multiplication by $-1$ on long simple roots and by $q$ on short simple roots. Let $V_\varepsilon = \text{Ind}_{\mathcal{H}_0}^{\mathcal{H}}(\varepsilon)$. Then the diagram (\ref{firstdiagram}) commutes by taking $\mathcal{F}_{V_\varepsilon} \, : h \longmapsto h \cdot \pi^{\rho^\vee_\varepsilon}$ and $\mathcal{L}_{V_\varepsilon}$ the non-split Bessel functional of Novodvorsky and Piatetski-Shapiro. (See \cite{bff} and Section~\ref{bessel} for the precise definition of the Bessel model.)
\label{besselthm} \end{theorem}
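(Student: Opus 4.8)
The plan is to reduce Theorem~\ref{besselthm} to a statement purely about the finite Hecke algebra $\mathcal{H}_0$ and the combinatorics of the affine Weyl group, in the same spirit as Theorems~\ref{sphericalthm} and \ref{whittakerthm}. First I would recall from \cite{bff} (and Section~\ref{bessel}) the defining transformation property of the non-split Bessel functional $\mathcal{L}_{V_\varepsilon}$ on $SO_{2n+1}$: it is the unique-up-to-scalar functional on $i_B^G(\chi_\text{univ}^{-1})^I$ satisfying $\mathcal{L}_{V_\varepsilon}(\phi(hg)) = \rho_S(h)\mathcal{L}_{V_\varepsilon}(\phi(g))$ for $h$ in the appropriate Bessel subgroup $S$ (a semidirect product of a unipotent radical with a torus-like piece), against the relevant character of that subgroup. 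The key first step is then to verify that such a functional, restricted to the Iwahori-fixed vectors, actually exists and is unique; this is where one invokes the local uniqueness results for Bessel models on odd orthogonal groups, and one must check that the dimension of the space of such functionals on the universal principal series is exactly one.

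The heart of the argument is to show that $\mathcal{L}_{V_\varepsilon}$, which a priori is just some $R$-linear functional, in fact intertwines the left $\mathcal{H}$-action on $i_B^G(\chi_\text{univ}^{-1})^I$ (pulled back via $\eta$ from the left action \eqref{leftaction} on $\mathcal{M}$) with the left $\mathcal{H}$-action on $V_\varepsilon = \text{Ind}_{\mathcal{H}_0}^{\mathcal{H}}(\varepsilon)$. The strategy: the left $\mathcal{H}$-module $i_B^G(\chi_\text{univ}^{-1})^I$ is generated by $\phi_1$ (the image of $v_1$ under $\eta$, see \eqref{phionedef}) as a right $\mathcal{H}$-module, but the intertwiner $\mathcal{L}_{V_\varepsilon}$ is determined once we know $\mathcal{L}_{V_\varepsilon}(\phi_1)$ together with how $\mathcal{L}_{V_\varepsilon}$ interacts with right convolution by the generators $T_s$ of $\mathcal{H}_0$ and by the elements $\pi^\mu \in R$. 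Because $\mathcal{L}_{V_\varepsilon}$ is left-$S$-equivariant and the left action of $\mathcal{H}$ commutes with the right action, the computation reduces to: (i) evaluating $\mathcal{L}_{V_\varepsilon}(\phi_1)$, which should come out to be the prescribed vector $\pi^{\rho^\vee_\varepsilon} \in R \simeq V_\varepsilon$ (after normalizing the scalar), using a Bessel-function analogue of the Casselman–Shalika method — i.e. an integration over $S \cap BwI$ cell-by-cell in the Iwahori–Bruhat decomposition; and (ii) checking the recursion: that right convolution of $\phi_1$ by $T_s$ has Bessel functional given by the action of $\varepsilon(T_s) \in \{-1,q\}$ followed by the appropriate $\mathcal{H}$-module structure, i.e. that the functional kills exactly the ``$-1$-eigenspace" directions dictated by $\varepsilon$. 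This is precisely the point at which the partition $\Phi^+ = \Phi^+_{-1}(\varepsilon) \sqcup \Phi^+_q(\varepsilon)$ and the definition \eqref{generalrhochicken} of $\rho^\vee_\varepsilon$ enter: the long/short dichotomy in the Bessel character of $S$ must match the $-1$-versus-$q$ dichotomy of $\varepsilon$ on the simple reflections.

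Concretely, I would proceed as follows. Step one: set up the Iwahori–Bruhat decomposition $G = \bigsqcup_{w \in \tilde W} T(\mathcal{O})U w I$ and express $\mathcal{L}_{V_\varepsilon}(\phi_1 \ast T_s)$ as a sum of two cell contributions using the standard two-term identity for convolution by $T_s$ (the same $p$-adic integral that underlies the intertwining operator calculation), exactly as in the Whittaker case of \cite{bbl}. Step two: use the left $S$-equivariance to evaluate each cell integral; the unipotent part of $S$ forces most cells to contribute zero, and the surviving cells yield a Gauss-sum-type factor or a $q$-factor according to whether the wall crossed is long or short. Step three: match this two-term recursion with the defining relation of $\text{Ind}_{\mathcal{H}_0}^{\mathcal{H}}(\varepsilon)$ — namely that in $V_\varepsilon$ one has $v \cdot T_s = \varepsilon(T_s) v$ for $v$ coming from $\mathcal{H}_0$ and the Bernstein-type relations for how $\pi^\mu$ interacts with $T_s$ — thereby pinning down $\mathcal{L}_{V_\varepsilon}$ on all of $\mathcal{M}$ once its value on $\phi_1$ is known. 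Step four: compute $\mathcal{L}_{V_\varepsilon}(\phi_1)$ directly as a single cell integral over $S \cap BI$ and identify the answer with $\pi^{\rho^\vee_\varepsilon}$; the shift by $\rho^\vee_\varepsilon$ arises from the modular character $\delta_B$ appearing in \eqref{phionedef} interacting with the character of $S$, just as $\rho^\vee$ arises in Theorem~\ref{whittakerthm}. Finally, compare with Sakellaridis' closed formula \cite{sakellaridis} (or directly with \cite{bff}) to confirm the normalization.

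The main obstacle I expect is Step two, the explicit evaluation of the Bessel cell integrals: unlike the Whittaker case, where the stabilizer of the character $\psi$ in the torus acts transitively enough to make the combinatorics clean, the Bessel subgroup $S$ of $SO_{2n+1}$ has a more intricate structure (a Heisenberg-type unipotent radical together with a one-dimensional torus), so determining exactly which Iwahori cells support a nonzero Bessel integral — and with what $q$-power or Gauss sum — requires a careful case analysis keyed to the short root at the end of the $B_n$ Dynkin diagram. Controlling this uniformly in $n$, and verifying that the resulting two-term recursion is genuinely the $\varepsilon$-recursion of $\text{Ind}_{\mathcal{H}_0}^{\mathcal{H}}(\varepsilon)$ rather than some twist of it, is the crux; once that is in hand, the commutativity of \eqref{firstdiagram} follows formally from the uniqueness of $\mathcal{L}_{V_\varepsilon}$ and the fact that both composites $\mathcal{L}_{V_\varepsilon}\circ\eta$ and $\mathcal{F}_{V_\varepsilon}$ are left $\mathcal{H}$-intertwiners $\mathcal{M} \to V_\varepsilon$ agreeing on the generator $v_1$.
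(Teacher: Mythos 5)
Your proposal is correct in outline but takes a substantially different and much more laborious route than the paper, and the step you flag as the main obstacle (Step two: direct evaluation of the Bessel cell integrals) is precisely the step the paper avoids entirely.

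The paper's proof works through the intertwining-operator formalism of Section~3. It uses the identity (\ref{intertwinerashecke}), $\mathcal{A}_{s_\alpha} = (1-q^{-1})\pi^{\alpha^\vee} + q^{-1}(1-\pi^{\alpha^\vee})T_{s_\alpha}$, to trade the left action of $T_{s_\alpha}$ for the well-behaved intertwining operator $\mathcal{A}_{s_\alpha}$. Uniqueness of the Bessel functional (Proposition~\ref{besseluniqueness}) is then deployed not merely to show the functional is well defined but --- and this is the key move you leave on the table --- to guarantee that $\mathcal{B}\circ\mathcal{A}_{s_\alpha}$ is \emph{proportional} to $s_\alpha\circ\mathcal{B}$, which reduces the entire computation to determining a single constant $c_\alpha\in R$. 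That constant is found in Proposition~\ref{besselintertwinercases} by testing on the \emph{spherical} vector $\phi^\circ$, whose Bessel value $\mathcal{B}(\phi^\circ)$ is already known from Theorem~1.5 of \cite{bff} (a product over the long positive roots), combined with Casselman's formula $\mathcal{A}_{s_\alpha}\phi^\circ=(1-q^{-1}\pi^{\alpha^\vee})\phi^\circ$. The long/short dichotomy then falls out of the shape of that known product, with no fresh cell-by-cell analysis required. Your Step two would, in effect, re-derive the hard part of \cite{bff} from scratch; the paper instead imports it as a black box and needs only one clean test vector. Both approaches yield the theorem, but the paper's is shorter and also parallels the proofs of Theorems~\ref{sphericalthm} and~\ref{whittakerthm} exactly, which likewise rest on Casselman--Shalika-style intertwiner identities rather than new integral computations.

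One small inaccuracy in Step four: the factor $\pi^{\rho^\vee_\varepsilon}$ in $\mathcal{B}(\phi_1)=\pi^{\rho^\vee_\varepsilon}$ does not "arise from" $\delta_B$ interacting with the character of $S$; it is inserted by hand as an explicit prefactor in the normalization (\ref{besselnormalization}), exactly as $\pi^{\rho^\vee}$ is inserted into the definition of $\mathcal{W}$. The underlying cell integral over $T_\psi(\mathcal{O})\bar{U}_P\cap BI = T_\psi(\mathcal{O})\bar{U}_P\cap I$ simply evaluates to $1$.
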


This result is a first step toward what we expect to be a much larger story connecting models attached to unique functionals for $p$-adic groups and Hecke algebra modules, and we speculate about
possible generalizations of the previous three theorems at the end of this introduction.

In Sections~\ref{alternatorformulas} and~\ref{sphericalevalproof}, we turn to explicit formulas for spherical vectors.
Recall that the spherical vector in $i_B^G(\chi_\text{univ}^{-1})^I$ is defined by
$$ \phi^\circ (g) := \delta^{-1/2}(\pi^\mu) \pi^{-\mu} \quad \text{if $g$ has Iwasawa decomposition $t u k$ with $t \sim \pi^\mu \in T(F) / T(\mathcal{O})$.} $$
Equivalently, according to the Iwahori-Bruhat decomposition, we may express $\phi^\circ = \sum_{w \in W} \phi_w$ where $\phi_w = \eta(v_w)$. We also
write $v^\circ := \sum_{w \in W} v_w$ for the ``spherical vector'' in $\mathcal{M}$.

\begin{theorem} Let $\mathcal{F}_{V_\varepsilon}$ be the left $\mathcal{H}$-module intertwiner from $\mathcal{M} \simeq \mathcal{H}$ to $V_\varepsilon = \text{Ind}_{\mathcal{H}_0}^{\mathcal{H}}(\varepsilon)$ for a character $\varepsilon$ of $\mathcal{H}_0$ given by $h \longmapsto h \cdot \pi^{\rho_\varepsilon^\vee}$, where $\rho_\varepsilon^\vee$ is the element of $P^\vee$ defined in (\ref{generalrhochicken}). Then to any coweight $\lambda$,
\begin{equation} \mathcal{F}_{V_\varepsilon} (v^\circ \ast \pi^\lambda) = \frac{\pi^{-\rho^\vee_\varepsilon} \prod_{\alpha \in \Phi^+_{-1}(\varepsilon)} (1 - q \pi^{\alpha^\vee})}{\pi^{\rho^\vee} \prod_{\alpha \in \Phi^+} 1 - \pi^{-\alpha^\vee}} \mathcal{A} \left( \pi^{\lambda + 2\rho_\varepsilon^\vee - \rho^\vee} \prod_{\alpha \in \Phi^+_q(\varepsilon)} (1 - q \pi^{\alpha^\vee}) \right) \label{sphericalvectorformula} \end{equation}
where $\mathcal{A} := \sum_{w \in W} (-1)^{\ell(w)} w$, the familiar alternator over the Weyl group $W$. (Products over empty subsets are taken to be 1. Note in the convolution by $\pi^\lambda$, this only agrees with the characteristic function $1_{I \pi^\lambda I}$ if $\lambda$ is dominant.)
\label{alternatorformula} \end{theorem}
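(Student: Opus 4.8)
The plan is to transport everything to the Iwahori--Hecke algebra, reduce the left--hand side of (\ref{sphericalvectorformula}) to a sum of Demazure--Lusztig operators applied to a single monomial, and then prove the resulting ``deformed Demazure/Weyl'' identity. First I would use the right $\mathcal{H}$--module isomorphism $\mathcal{M}\simeq\mathcal{H}$, $h\mapsto v_1\ast h$, to locate $v^\circ$. Its image $\phi^\circ=\eta(v^\circ)$ is right $G(\mathcal{O})$--invariant, so $v^\circ$ satisfies $v^\circ\ast 1_{IwI}=q^{\ell(w)}v^\circ$ for $w\in W$; hence under $\mathcal{M}\simeq\mathcal{H}$ it corresponds to an $h^\circ\in\mathcal{H}$ that is a right $\mathcal{H}_0$--eigenvector for the trivial character. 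Writing $\mathcal{H}=\bigoplus_\mu\pi^\mu\mathcal{H}_0$ as a right $\mathcal{H}_0$--module and using that the trivial--character eigenline of $\mathcal{H}_0$ is spanned by $\sum_{v\in W}1_{IvI}$, one gets $h^\circ=\theta\cdot\sum_{v\in W}1_{IvI}$ for some $\theta\in R$; since $v^\circ\in\operatorname{span}\{v_w:w\in W\}$ forces $\theta$ to be a scalar, and the identity--coset value $v^\circ(1)=1$ equals the scalar (as $T(\mathcal{O})UI\cap Iv^{-1}I=I$ for $v=1$ and is empty otherwise), we obtain $h^\circ=\sum_{w\in W}1_{IwI}\in\mathcal{H}_0$. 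Therefore $v^\circ\ast\pi^\lambda$ corresponds to $h^\circ\pi^\lambda$, and unwinding $\mathcal{F}_{V_\varepsilon}\colon h\mapsto h\cdot\pi^{\rho^\vee_\varepsilon}$ together with the identification $V_\varepsilon\simeq R$ gives
$$\mathcal{F}_{V_\varepsilon}(v^\circ\ast\pi^\lambda)=\Bigl(\textstyle\sum_{w\in W}1_{IwI}\Bigr)\cdot\pi^{\lambda+\rho^\vee_\varepsilon}\in V_\varepsilon\simeq R,$$
the left action of $h^\circ$ on $\pi^{\lambda+\rho^\vee_\varepsilon}$.

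Next I would make the left action of $\mathcal{H}$ on $V_\varepsilon\simeq R$ explicit via the Bernstein--Lusztig presentation: $\pi^\mu\in R\subset\mathcal{H}$ acts by multiplication by $\pi^\mu$, while $T_s:=1_{IsI}$ acts (in the normalization induced by $\eta$) by the Demazure--Lusztig operator $\mathcal{D}^\varepsilon_s(f)=\varepsilon(T_s)\,s(f)+(q-1)\,\frac{f-s(f)}{1-\pi^{-\alpha^\vee_s}}$, which equals $\frac{1-q\pi^{-\alpha^\vee_s}}{1-\pi^{-\alpha^\vee_s}}\,s(f)+\frac{q-1}{1-\pi^{-\alpha^\vee_s}}f$ when $\varepsilon(T_s)=q$ and $\frac{\pi^{-\alpha^\vee_s}-q}{1-\pi^{-\alpha^\vee_s}}\,s(f)+\frac{q-1}{1-\pi^{-\alpha^\vee_s}}f$ when $\varepsilon(T_s)=-1$. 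These satisfy the braid relations and $(\mathcal{D}^\varepsilon_s)^2=(q-1)\mathcal{D}^\varepsilon_s+q$, so $\mathcal{D}^\varepsilon_w$ is well defined, and the computation above reduces (\ref{sphericalvectorformula}) to the combinatorial identity
$$\sum_{w\in W}\mathcal{D}^\varepsilon_w(f)=\pi^{-\rho^\vee_\varepsilon}\!\!\prod_{\alpha\in\Phi^+_{-1}(\varepsilon)}\!\!(1-q\pi^{\alpha^\vee})\cdot\frac{\mathcal{A}\bigl(\pi^{\rho^\vee_\varepsilon-\rho^\vee}\,f\,\textstyle\prod_{\alpha\in\Phi^+_q(\varepsilon)}(1-q\pi^{\alpha^\vee})\bigr)}{\mathcal{A}(\pi^{\rho^\vee})},$$
which yields (\ref{sphericalvectorformula}) on taking $f=\pi^{\lambda+\rho^\vee_\varepsilon}$ and using the Weyl denominator identity $\mathcal{A}(\pi^{\rho^\vee})=\pi^{\rho^\vee}\prod_{\alpha\in\Phi^+}(1-\pi^{-\alpha^\vee})$.

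To prove this identity I would first localize the structure. Pairing $w$ with $s_\alpha w$ and using the quadratic relation shows $\mathcal{D}^\varepsilon_s\circ\bigl(\sum_{w\in W}\mathcal{D}^\varepsilon_w\bigr)=q\sum_{w\in W}\mathcal{D}^\varepsilon_w$ for every simple $s$, so $\Psi(f):=\sum_w\mathcal{D}^\varepsilon_w(f)$ is a $q$--eigenvector of every $\mathcal{D}^\varepsilon_s$ over $\operatorname{Frac}(R)$. Solving $\mathcal{D}^\varepsilon_s(g)=qg$ directly shows that $g$ is $s$--invariant when $\varepsilon(T_s)=q$ and satisfies $s(g)=\frac{1-q\pi^{-\alpha^\vee_s}}{\pi^{-\alpha^\vee_s}-q}\,g$ when $\varepsilon(T_s)=-1$. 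Using that a simple reflection $s_\alpha$ is an isometry permuting $\Phi^+\setminus\{\alpha\}$ --- hence permutes $\Phi^+_{-1}(\varepsilon)$ and fixes $\rho^\vee_\varepsilon$ when $\alpha$ has ``$q$--length'', and permutes $\Phi^+_{-1}(\varepsilon)\setminus\{\alpha\}$ while sending $\alpha\mapsto-\alpha$ and $\rho^\vee_\varepsilon\mapsto\rho^\vee_\varepsilon-\alpha^\vee$ when $\alpha$ has ``$-1$--length'' --- one checks that $g_0:=\pi^{-\rho^\vee_\varepsilon}\prod_{\alpha\in\Phi^+_{-1}(\varepsilon)}(1-q\pi^{\alpha^\vee})$ satisfies exactly these twisted--invariance relations. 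Since any two nonzero solutions have $W$--invariant ratio, this common eigenspace is a one--dimensional $\operatorname{Frac}(R)^W$--vector space $g_0\cdot\operatorname{Frac}(R)^W$; the right--hand side of the displayed identity lies in it as well, because $\mathcal{A}(\,\cdot\,)/\mathcal{A}(\pi^{\rho^\vee})$ is a ratio of anti--invariants. It remains to match the $W$--invariant factor: both factors are $\mathbb{C}$--linear in $f$, and the companion relation $\bigl(\sum_w\mathcal{D}^\varepsilon_w\bigr)\circ\mathcal{D}^\varepsilon_s=q\sum_w\mathcal{D}^\varepsilon_w$ forces each to satisfy the same recursion $F(\mathcal{D}^\varepsilon_s f)=qF(f)$; combined with a comparison of the dominant (top) exponents on the two sides this pins the factor down.

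The main obstacle is precisely this last step: the genuinely \emph{deformed} Demazure/Weyl identity. At $q=1$ it collapses to the classical coincidence of the Demazure and Weyl character formulas, but for general $q$ one must actually collapse the sum $\sum_w\mathcal{D}^\varepsilon_w$, and the delicate point is handling the two species of Demazure--Lusztig operator simultaneously and extracting the factor $\prod_{\alpha\in\Phi^+_q(\varepsilon)}(1-q\pi^{\alpha^\vee})$, which does not appear in any individual summand. I would carry this out by induction on $\ell(w_0)$ in the style of Casselman and Shalika (\cite{casselman-shalika}), peeling off one simple reflection at a time. As consistency checks, specializing $\varepsilon=\text{triv}$ (so $\Phi^+_{-1}(\varepsilon)=\varnothing$, $\rho^\vee_\varepsilon=0$) recovers the Macdonald formula for the spherical function and $\varepsilon=\text{sign}$ (so $\Phi^+_q(\varepsilon)=\varnothing$, $\rho^\vee_\varepsilon=\rho^\vee$) recovers the Casselman--Shalika formula, in agreement with Theorems~\ref{sphericalthm} and~\ref{whittakerthm}.
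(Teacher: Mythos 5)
Your reduction of the theorem to an operator identity is correct and matches the paper's route: you correctly identify $v^\circ$ with $\sum_{w\in W}1_{IwI}$, pass through $\mathcal{F}_{V_\varepsilon}$, rewrite everything in terms of the Demazure--Lusztig operators (your $\mathcal{D}^\varepsilon_s$, the paper's $T_{s_\alpha}$ acting via (\ref{signaction})), and arrive at a statement equivalent to Theorem~\ref{operatoridentthm} after conjugating by $\pi^{\rho^\vee_\varepsilon}$ and applying the Weyl denominator formula. The formulas for the two species of operator and the verification that $g_0=\pi^{-\rho^\vee_\varepsilon}\prod_{\alpha\in\Phi^+_{-1}(\varepsilon)}(1-q\pi^{\alpha^\vee})$ satisfies the required twisted invariance also check out.

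However, there is a genuine gap exactly where you flag ``the main obstacle,'' and it is not a small one. You establish that both sides of your displayed identity lie in the one-dimensional $\operatorname{Frac}(R)^W$-line spanned by $g_0$, but you do not actually pin down the $W$-invariant proportionality factor: the claim that ``a comparison of the dominant (top) exponents on the two sides pins the factor down,'' together with the $F(\mathcal{D}^\varepsilon_sf)=qF(f)$ recursion, is left as an outline, and the assertion that the right-hand side satisfies this recursion is itself nonobvious (it is essentially the paper's relation $\Omega' s_i=-\Omega'\pi^{\alpha_i^\vee}$ in disguise and needs its own case analysis). An ``induction on $\ell(w_0)$ in the style of Casselman--Shalika'' is named but not carried out, and it is precisely here that the factor $\prod_{\alpha\in\Phi^+_q(\varepsilon)}(1-q\pi^{\alpha^\vee})$, which you rightly note appears in no individual summand, must materialize.

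The paper closes this gap by a different and cleaner device. After proving the left- and right-invariance relations for $\Omega'=D_{(-1)}^{-1}(\sum_w\mathfrak{T}_w)D_{(q)}^{-1}$, it writes $\Omega'=\sum_w w\phi$, shows by a localization argument that $\phi$ has no denominators involving $q$, and then bounds the degree in $q$ (treated as a formal indeterminate in $\tilde R=\mathbb{C}[q][P^\vee]$) to conclude that $\phi$ is independent of $q$. The identity is then proved by specializing to $q=0$, where each $\mathfrak{T}_i$ reduces to the divided-difference operator $\partial_i-1$ regardless of the root length, and $\sum_w\mathfrak{T}_w^{(q=0)}=\partial_{w_0}$; the Demazure/Weyl comparison does the rest. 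This also exposes a minor error in your write-up: the classical Demazure/Weyl coincidence is the $q=0$ specialization, not $q=1$. At $q=1$ the operators $\mathcal{D}^\varepsilon_s$ degenerate to $\pm s$ and one gets the symmetrizer or alternator, not the Demazure operator. If you want to complete your eigenspace-based argument, you should replace the $q=1$ remark, verify the right recursion for the alternator side in both root-length cases, and supply the leading-term comparison; alternatively, adopting the paper's degree-in-$q$ argument with $q=0$ specialization is the most economical way to finish.
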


As the proof of the theorem in Section~\ref{sphericalevalproof} will show, this identity follows from a deformation of the equality between the Demazure character formula (cf.~\cite{andersen}) and the Weyl character formula (our Theorem~\ref{operatoridentthm}). 
The statement and proof of the above theorem don't require any input from the representation theory of $p$-adic groups. However, if the intertwiner $\mathcal{F}_{V_\varepsilon}$ can be identified with an $R$-valued functional $\mathcal{L}_{V_\varepsilon}$ on $i_B^G(\chi_{\text{univ}}^{-1})$ so that (\ref{firstdiagram}) commutes, then under certain assumptions, we can transfer the above result to the values of the spherical vector $\phi^\circ \in i_B^G(\chi_{\text{univ}}^{-1})$ under $\mathcal{L}_{V_\varepsilon}$ as follows.

\begin{proposition} Let $\mathcal{L}_{V_\varepsilon}$ be an $R$-valued functional on $i_B^G(\chi_{\text{univ}}^{-1})$, satisfying the conditions of Proposition~\ref{toruseval} for a subgroup $S$ of $G$ and such that the diagram (\ref{firstdiagram}) commutes with respect to $\mathcal{F}_{V_\varepsilon} :  h \longmapsto h \cdot \pi^{\rho_\varepsilon^\vee}$. Then for any dominant coweight $\lambda$,
$$ \mathcal{L}_{V_\varepsilon}(\rho(\pi^{-\lambda}) \cdot \phi^\circ) = \frac{1}{| I \pi^\lambda I |} \mathcal{F}_{V_\varepsilon} (v^\circ \ast \pi^\lambda), $$
so the alternator formula of Theorem~\ref{alternatorformula} applies. Here $\rho(\pi^{-\lambda})$ denotes the right translation action by $\pi^{-\lambda}$ on functions in $i_B^G(\chi_{\text{\em{univ}}}^{-1})$ and $| I \pi^\lambda I |$ is the Haar measure assigned to the double coset.
\label{twoincarnations}
\end{proposition}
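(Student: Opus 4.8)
The plan is to reinterpret the right translate $\rho(\pi^{-\lambda})\phi^\circ$, once averaged over $I$, as right convolution by the Hecke element $1_{I\pi^\lambda I}$, to carry this across the right $\mathcal{H}$-module isomorphism $\eta$ to $\mathcal{M} \simeq \mathcal{H}$, and then to express the result in terms of $\mathcal{F}_{V_\varepsilon}$ via the commutative diagram \eqref{firstdiagram}. The conditions imported from Proposition~\ref{toruseval} will be used only to insert this $I$-average without changing the value of $\mathcal{L}_{V_\varepsilon}$.

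For the first part I would proceed as follows. Write $e_I = 1_I$, so that $\phi \mapsto \phi \ast e_I = \int_I \rho(\kappa)\phi\, d\kappa$ is the projection onto $I$-fixed vectors (recall $\mathrm{vol}(I) = 1$). Unwinding the convolution gives $\rho(\pi^{-\lambda})\phi^\circ \ast e_I = \phi^\circ \ast 1_{\pi^\lambda I}$, and since $\phi^\circ = \eta(v^\circ)$ is right $G(\mathcal{O})$-invariant by its Iwasawa-decomposition definition, choosing a coset decomposition $I\pi^\lambda I = \bigsqcup_{i=1}^{|I\pi^\lambda I|} \gamma_i \pi^\lambda I$ with $\gamma_i \in I$ shows $\phi^\circ \ast 1_{\gamma_i\pi^\lambda I} = \phi^\circ \ast 1_{\pi^\lambda I}$ for each $i$, whence
\[ \rho(\pi^{-\lambda})\phi^\circ \ast e_I \;=\; \frac{1}{|I\pi^\lambda I|}\, \phi^\circ \ast 1_{I\pi^\lambda I} . \]
For $\lambda$ dominant one has $1_{I\pi^\lambda I} = \pi^\lambda$ inside $\mathcal{H}$ (the remark following Theorem~\ref{alternatorformula}), and as $\eta$ is a right $\mathcal{H}$-module isomorphism with $\eta(v^\circ) = \phi^\circ$ we obtain $\eta^{-1}\bigl(\rho(\pi^{-\lambda})\phi^\circ \ast e_I\bigr) = \frac{1}{|I\pi^\lambda I|}\, v^\circ \ast \pi^\lambda$. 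Since $\rho(\pi^{-\lambda})\phi^\circ \ast e_I$ lies in $i_B^G(\chi_{\text{univ}}^{-1})^I$, commutativity of \eqref{firstdiagram} together with linearity of $\mathcal{F}_{V_\varepsilon}$ gives
\[ \mathcal{L}_{V_\varepsilon}\bigl(\rho(\pi^{-\lambda})\phi^\circ \ast e_I\bigr) \;=\; \mathcal{F}_{V_\varepsilon}\Bigl(\eta^{-1}\bigl(\rho(\pi^{-\lambda})\phi^\circ \ast e_I\bigr)\Bigr) \;=\; \frac{1}{|I\pi^\lambda I|}\, \mathcal{F}_{V_\varepsilon}(v^\circ \ast \pi^\lambda). \]

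It then remains to prove $\mathcal{L}_{V_\varepsilon}(\rho(\pi^{-\lambda})\phi^\circ) = \mathcal{L}_{V_\varepsilon}(\rho(\pi^{-\lambda})\phi^\circ \ast e_I)$. Writing the right-hand side as $\tfrac{1}{[I:I_\lambda]}\sum_{\kappa \in I/I_\lambda} \rho(\kappa)\rho(\pi^{-\lambda})\phi^\circ$, with $I_\lambda = I \cap \pi^{-\lambda}G(\mathcal{O})\pi^{\lambda}$ the stabilizer of $\rho(\pi^{-\lambda})\phi^\circ$ in $I$ (for dominant $\lambda$ this is the ``lower-triangular'' part of $I$, and one checks $[I:I_\lambda] = |I\pi^\lambda I|$, consistent with the normalizing factor above), it suffices to show that $\mathcal{L}_{V_\varepsilon}$ is constant on $\{\rho(\kappa)\rho(\pi^{-\lambda})\phi^\circ : \kappa \in I\}$. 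This is precisely the content I would take from Proposition~\ref{toruseval}: the left $(S,\rho_S)$-equivariance of $\mathcal{L}_{V_\varepsilon}$, together with the Iwasawa decomposition and support of $\rho(\pi^{-\lambda})\phi^\circ$ for dominant $\lambda$, forces the integral (or finite sum) computing $\mathcal{L}_{V_\varepsilon}$ on this vector to be unchanged under right translation by $\kappa \in I$. Combining this with the displayed identity yields the proposition, after which the alternator formula of Theorem~\ref{alternatorformula} applies to the right-hand side.

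The step I expect to be the main obstacle is this last one — reconciling right translation by $I$ with the functional $\mathcal{L}_{V_\varepsilon}$. It is also the only place where dominance of $\lambda$ is genuinely used: for non-dominant $\lambda$ both the subgroup $I_\lambda$ and the identification $\pi^\lambda = 1_{I\pi^\lambda I}$ in $\mathcal{H}$ degenerate, so the right-translation and convolution pictures cease to agree, which is the reason for the restriction to dominant coweights and for the parenthetical remark in Theorem~\ref{alternatorformula}.
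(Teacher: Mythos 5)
Your plan and the paper's proof make the same argument, but located at different points in the paper: the paper's proof of Proposition~\ref{twoincarnations} is literally a one-liner --- apply Corollary~\ref{evalcorollary} with $V = V_\varepsilon$ and $v = \pi^{\rho^\vee_\varepsilon}$, and match the right-hand side via~(\ref{prettyobviousreally}) --- whereas what you write out inline is in substance the proof of the underlying Proposition~\ref{toruseval}, executed at the level of $\phi^\circ$ rather than term-by-term over $\phi_w$. Your first computational step, the reduction $\rho(\pi^{-\lambda})\phi^\circ \ast e_I = |I\pi^\lambda I|^{-1}\phi^\circ\ast 1_{I\pi^\lambda I}$ using right $G(\mathcal{O})$-invariance of $\phi^\circ$, is correct and cleanly done, and the passage across $\eta$ to $\mathcal{F}_{V_\varepsilon}$ is exactly right.

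The step you rightly flag as the obstacle --- showing $\mathcal{L}_{V_\varepsilon}(\rho(\pi^{-\lambda})\phi^\circ) = \mathcal{L}_{V_\varepsilon}(\rho(\pi^{-\lambda})\phi^\circ\ast e_I)$, equivalently that $\mathcal{L}_{V_\varepsilon}$ is constant on $\{\rho(\kappa)\rho(\pi^{-\lambda})\phi^\circ : \kappa\in I\}$ --- is not actually carried out, and the ``Iwasawa decomposition and support'' heuristic you appeal to is not the route the hypotheses supply. The argument uses condition~(2) of Proposition~\ref{toruseval} directly: given $\kappa\in I$, since $\pi^\lambda\kappa^{-1}\in I\pi^\lambda I = I\pi^\lambda(I\cap S)$, write $\pi^\lambda\kappa^{-1} = j\pi^\lambda s$ with $j\in I$ and $s\in I\cap S$, so $\kappa\pi^{-\lambda} = s^{-1}\pi^{-\lambda}j^{-1}$. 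Then $\rho(\kappa\pi^{-\lambda})\phi^\circ = \rho(s^{-1})\rho(\pi^{-\lambda})\rho(j^{-1})\phi^\circ = \rho(s^{-1})\rho(\pi^{-\lambda})\phi^\circ$ by right $I$-invariance of $\phi^\circ$, and by the $S$-equivariance of $\mathcal{L}_{V_\varepsilon}$ together with condition~(1) (triviality of $\sigma$ on $I\cap S$) this evaluates to $\sigma(s^{-1})\mathcal{L}_{V_\varepsilon}(\rho(\pi^{-\lambda})\phi^\circ) = \mathcal{L}_{V_\varepsilon}(\rho(\pi^{-\lambda})\phi^\circ)$. This is exactly the mechanism used in the paper's proof of Proposition~\ref{toruseval}; with it inserted, your plan closes up and agrees with the paper's argument.
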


As we will explain in the subsequent sections, the assumptions of the proposition are satisfied for the spherical, Whittaker, and Bessel functionals of Theorems~\ref{sphericalthm},~\ref{whittakerthm}, and~\ref{besselthm}. Thus we match the evaluation of the spherical function at anti-dominant weights in the various models of \cite{macdonald, casselman-shalika, bff} by combining Theorems~\ref{sphericalthm},~\ref{whittakerthm}, and~\ref{besselthm}, respectively, with Theorem~\ref{alternatorformula} and the above proposition. 

We often wish to apply Theorem~\ref{alternatorformula} whenever we have a positive answer to our refined form of the Main Question with $v_\varepsilon = \pi^{\rho_\varepsilon^\vee}$(see Examples~\ref{whitexample} and~\ref{spherexample} in Section~\ref{sphericalevalproof}). However, it is worth noting that even without a realization of $\mathcal{L}_{V_\varepsilon}$ according to its transformation under a subgroup $S$ of $G$, Theorem~\ref{alternatorformula} gives an interesting formula for the spherical vector under $\mathcal{F}_{V_\varepsilon}$. For example, we may take $\varepsilon$ to be the final character of the Hecke algebra of $SO_{2n+1}$ not covered by Theorems \ref{sphericalthm}, \ref{whittakerthm}, and~\ref{besselthm} -- the one in which $\varepsilon$ acts by $-1$ on short simple roots and by $q$ on long simple roots. Inputing this into Theorem~\ref{alternatorformula}, we find the answer is (up to normalization by an element of $R$) equal to Sakellaridis' formula for the spherical vector in the Shalika model on $GL_{2n}$ (see \cite{sakellaridis}). This case is Example~\ref{shalikaexample} in Section~\ref{sphericalevalproof}. Note that in Sections~5 and~7 of~\cite{sakellaridis}, it is shown that an unramified principal series on $GL_{2n}$ admits a Shalika model precisely when the Langlands parameters of the unramified character match the non-trivial parameters of an $SO_{2n+1}$ representation. The matching of the spherical vectors from these two different contexts is striking and we wonder whether there exists a functional $\mathcal{L}_{V_\varepsilon}$ defined directly in terms of a unipotent integration on $SO_{2n+1}$, whose agreement with the Shalika model's spherical vector might be explained by Langlands' functoriality.

The three specific functionals $\mathcal{L}$ we study are associated to models of the form $\text{Ind}_S^G(\sigma)$ with $\sigma$ a representation of a subgroup $S$ of $G$, and Theorems \ref{sphericalthm}--\ref{besselthm} associate a character $\varepsilon$ of $\mathcal{H}_0$ to each $\mathcal{L}$ and hence the pair $(S, \sigma)$. So it is natural to ask for some general recipe relating the pair $(S, \sigma)$ to an irreducible representation of $\mathcal{H}_0$ that expands on these three examples. It seems likely that such a recipe may be related to Kawanaka's construction of the ``generalized Gelfand-Graev representations'' (gGGr) as described in \cite{kawanaka}, whose results are primarily in the context of algebraic groups over finite fields. Starting with a nilpotent $\text{Ad}(G)$-orbit with representative $A$, one attaches to it a representation $\eta_A$ of the unipotent radical $U_A$ of a parabolic $P_A$ using Kirillov's orbit method. The stabilizer $Z_L(A)$ of the isomorphism class of the representation $\eta_A$ contained in the Levi factor $L_A$ of $P_A$ is a reductive group and one may build a representation $\tilde{\eta}_A$ on $Z_L(A) U_A$ from $\eta_A$ and a representation of $Z_L(A)$. Then the gGGr is defined as $\text{Ind}_{Z_L(A) U_A}^G(\tilde{\eta}_A)$. Conjecture 2.4.5 of \cite{kawanaka} gives a precise recipe for producing gGGr's containing each unipotent representation -- those appearing in the Deligne-Lusztig character $R_T^1$ -- with multiplicity one. The unipotent principal series are precisely the representations with a Borel fixed vector, so this conjecture implies there is a collection of gGGr's containing each of the irreducible $\mathcal{H}_0$-modules with multiplicity one. In the remarks following the conjecture, Kawanaka states that the relation between the nilpotent $A$ used to construct the gGGr and the irreducible representation of $\mathcal{H}_0$ (or equivalently of the Weyl group of $G$ by Tits' deformation theorem) appearing inside it seems to match the Springer correspondence. 

In the final section of \cite{kawanaka}, he discusses how to adapt Conjecture 2.4.5 to the $p$-adic setting. For example, some care needs to be taken to ensure that the representation $\tilde{\eta}_A$ we construct is not merely a projective representation of $Z_L(A) U_A$. Some of these $p$-adic group conjectures were settled by Moeglin-Waldspurger \cite{moeglin-waldspurger}, but these were for the models $\text{Ind}_{U_A}^G(\eta_A)$. We need the more refined version induced from $Z_L(A) U_A$ to relate to our results. Our insight is to construct a representation not on $Z_L(A)$ but rather its intersection with $G(\mathcal{O})$, our chosen maximal compact subgroup. In this way, even the spherical model appears as a gGGr -- take $A = 0$ so that $U_A = Id.$ and $Z_L(A) = G$. Putting the trivial representation on $Z_L(A) \cap G(\mathcal{O})$, the resulting gGGr gives the spherical model. Our non-split Bessel model on odd orthogonal groups also arises as a gGGr in this way. So a likely candidate for the pair $(S, \sigma)$ we sought above to associate to a Hecke algebra representation might be $((Z_L(A) \cap G(\mathcal{O})) U_A, \tilde{\eta}_A)$ such that the associated gGGr contains the Hecke algebra representation with multiplicity one. Our results may be considered as evidence for this possibility and a framework under which to pursue further examples.

We thank David Ginzburg, Dihua Jiang, Baiying Liu, and Mark Reeder for helpful discussions during the preparation of this paper.

\section{The $\mathcal{H}$-module structure of $\text{Ind}_{\mathcal{H}_0}^{\mathcal{H}}(\varepsilon)$\label{inducedmodule}}

Let $\varepsilon$ be a character of $\mathcal{H}_0$, the finite Hecke algebra. In order to understand the $\mathcal{H}$-module intertwining map $\mathcal{F}_V$ appearing in the bottom arrow of (\ref{firstdiagram}), we study the (left) $\mathcal{H}$-module structure of $\text{Ind}_{\mathcal{H}_0}^{\mathcal{H}}(\varepsilon) =: V_\varepsilon$.

We noted in the introduction the vector space isomorphism $R \otimes_\mathbb{C} \mathcal{H}_0 \simeq \mathcal{H}$. For a basis of $\mathcal{H}_0$, we take $T_w := 1_{IwI}$, the characteristic functions on the double coset $IwI$ for each $w \in W$. The elements $T_{s_\alpha}$ corresponding to simple reflections $s_\alpha$ satisfy the usual braid relations of the underlying Weyl group and the quadratic relation:
\begin{equation}\label{q-relation}
(T_{s_\alpha} - q)(T_{s_\alpha} + 1) = 0.
\end{equation}
Then the so-called Bernstein presentation of $\mathcal{H}$ is obtained from bases and relations for $\mathcal{H}_0$ and $R$ together with the ``Bernstein relation'':
\begin{equation} T_{s_\alpha} \pi^\mu = \pi^{s_\alpha(\mu)} T_{s_\alpha} + (1-q) \frac{\pi^{s_\alpha(\mu)} - \pi^\mu}{1 - \pi^{-\alpha^\vee}}. \label{bernsteinrel} \end{equation}

We may use the Bernstein relation to determine the action of $T_{s_\alpha}$ on basis elements of the form $\pi^\mu x_\varepsilon$, where $x_\varepsilon$ denotes the generator of the one-dimensional space of the character $\varepsilon$. Indeed, applying (\ref{bernsteinrel}),
\begin{align} T_{s_\alpha} \cdot \pi^\mu x_\varepsilon & = \pi^{s_\alpha(\mu)} \varepsilon(T_{s_\alpha}) x_\varepsilon + (1-q) \frac{\pi^{s_\alpha(\mu)} - \pi^\mu}{1 - \pi^{-\alpha^\vee}} x_\varepsilon \nonumber \\
& = \left[ \varepsilon(T_{s_\alpha}) + \frac{(1-q)}{1 - \pi^{-\alpha^\vee}} \right] \pi^{s_\alpha(\mu)} x_\varepsilon + \frac{q- 1}{1 - \pi^{-\alpha^\vee}} \pi^\mu x_\varepsilon. \end{align}

Said another way, $T_{s_\alpha}$ acts on an element $f \in R := \mathbb{C}[P^\vee]$ by
\begin{equation}
T_{s_\alpha} \; : \; f \longmapsto \left[ \varepsilon(T_{s_\alpha}) + \frac{(1-q)}{1 - \pi^{-\alpha^\vee}} \right] f^{s_\alpha} + \frac{q- 1}{1 - \pi^{-\alpha^\vee}} f.
\label{signaction}
\end{equation}
Note that the right-hand side above is an element of $R$ as well, since $f - f^{s_\alpha}$ is divisible by $1 - \pi^{-\alpha^\vee}$.

This, together with the fact that $\pi^\lambda \in R$ acts by translation $\pi^\mu x_\varepsilon \longmapsto \pi^{\mu+\lambda} x_\varepsilon$, gives the left $\mathcal{H}$-module structure on $V_\varepsilon.$

\section{Principal series intertwining operators}

Here we record several standard facts about intertwining operators for principal series (see for example \cite{casselman, hkp} for more details). We would like to define a family of intertwining operators $\mathcal{A}_w$ for $w \in W$, the finite Weyl group of $G$, taking $\mathcal{M}$ to itself. The problem with simply taking the operator
$$ \mathcal{I}_w \; : \; \varphi \longmapsto \int_{U_w} \varphi(w^{-1} u g) \, du \quad \text{where} \quad U_w := U \cap w \overline{U} w^{-1}, $$
is that it doesn't preserve the space $\mathcal{M}$ and one typically solves this problem by extending scalars to an appropriate completion of $R$ according to roots in 
$$ \Phi^+_w := \{ \alpha \in \Phi^+ \; | \; w^{-1}(\alpha) \in \Phi^- \}, $$
where $\Phi^+$ and $\Phi^-$ are the sets of positive and negative roots according to our fixed choice of Borel subgroup of $G$ (see Section~1.11 of \cite{hkp}). Alternatively, we may set
\begin{equation} \mathcal{A}_w := \left[ \prod_{\alpha \in \Phi^+_w} (1 - \pi^{\alpha^\vee}) \right] \mathcal{I}_w, \label{nodenoms} \end{equation}
and this intertwiner preserves the space $\mathcal{M}$, as can be seen from rank one computations and elementary properties of intertwiners $\mathcal{I}_w$ as noted in Section~1.14 of \cite{hkp}. More explicitly, under the identification of $\text{End}_{\mathcal{H}}(\mathcal{M})$ and $\mathcal{H}$ described in the Introduction, to any simple reflection $s_\alpha$ in $W$, the normalized intertwiner
$ \mathcal{A}_{s_\alpha}$ is given by
\begin{equation} \mathcal{A}_{s_\alpha} = (1-q^{-1}) \pi^{\alpha^\vee} + q^{-1} (1 - \pi^{\alpha^\vee}) T_{s_\alpha}. \label{intertwinerashecke} \end{equation}
Thus, the principal series intertwiner $\mathcal{A}_{s_\alpha}$ on $i_B^G(\chi_{\text{univ}}^{-1})^I \simeq \mathcal{M}$ is very closely connected to the {\it left} action by the element $T_{s_\alpha}$ on $i_B^G(\chi_{\text{univ}}^{-1})^I$. This is a rephrasing of Bernstein's results for unramified principal series, which are presented in Rogawski's paper \cite{rogawski}. There he used this identity to recover results of Rodier and others on the structure of principal series, using algebraic properties of $\mathcal{H}$ -- that is, information about $\mathcal{A}_{s_\alpha}$ from information about $T_{s_\alpha}$. We turn this around and use the above identity to recover information about how $T_{s_\alpha}$ behaves with respect to our various choices of unique functional associated to unipotent subgroups.

\section{Proofs of Theorems~\ref{sphericalthm} and~\ref{whittakerthm}}

To prove both Theorems~\ref{sphericalthm} and~\ref{whittakerthm} (and eventually Theorem~\ref{besselthm}), 
we make use of two key ingredients. The first is the relation between principal series intertwiners and Hecke algebra elements in (\ref{intertwinerashecke}) in the previous section. The second is the uniqueness of the respective $R$-valued functionals to argue that, for each simple root $\alpha$, there exists a constant $k_\alpha \in R$ such that the following identity of functionals on $\mathcal{M} \simeq i_B^G(\chi_{\text{univ}}^{-1})^I$ holds:
$$ \mathcal{L} \circ \mathcal{A}_{s_\alpha} = k_\alpha \, (s_\alpha \circ \mathcal{L}), $$
where $\mathcal{L}$ is one of our chosen functionals -- spherical (denoted $\mathcal{S}$) or Whittaker (denoted $\mathcal{W}$) -- and $s_\alpha$ denotes the automorphism of $R$ determined by the simple reflection acting on $P^\vee$. We state this uniqueness formally in the next result.

\begin{proposition} For any commutative $\mathbb{C}$-algebra $A$, fix a character $\chi : T \longrightarrow A^\times$. \begin{enumerate}
\item \emph{(Macdonald, \cite{macdonald})} Then the space of $A$-module homomorphisms
$$ \mathcal{S} \; : \; i_B^G(\chi) \longrightarrow A $$
satisfying $\mathcal{S} (\rho(k) \phi) = \mathcal{S}(\phi)$ is one-dimensional. 
\item \emph{(Rodier, Theorem 3 of \cite{rodier})} Let $\psi$ be a non-degenerate character of $\overline{U}$. Then the space of $A$-module homomorphisms
$$ \mathcal{W} \; : \; i_B^G(\chi) \longrightarrow A $$
satisfying $\mathcal{W} (\rho(\bar{u}) \phi) = \psi(\bar{u}) \mathcal{W}(\phi)$ is one-dimensional.
\end{enumerate}
In both cases, $\rho$ denotes the $G$-action by right translation on $i_B^G(\chi)$.
\label{unique} \end{proposition}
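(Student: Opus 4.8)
The plan is to treat both parts by the standard ``Bruhat cell'' analysis of functionals on induced representations. First I would recall that the Iwasawa decomposition $G = BG(\mathcal{O})$, together with the Iwahori--Bruhat decomposition $G(\mathcal{O}) = \coprod_{w \in W} I w I$, lets us realize $i_B^G(\chi)$ as a direct sum, indexed by $W$, of the spaces of functions supported on the individual cells $B w I$. For the spherical functional $\mathcal{S}$, right $G(\mathcal{O})$-invariance forces the value of $\mathcal{S}(\phi)$ to be determined by integration of $\phi$ over a single right $G(\mathcal{O})$-orbit in $B\backslash G$, i.e. by the ``constant term'' of $\phi$ along $B$; concretely $\mathcal{S}(\phi) = \int_{G(\mathcal{O})} \phi(k)\,dk$ up to scalar, and one checks directly this is $A$-linear, well defined on $i_B^G(\chi)$, and right $G(\mathcal{O})$-invariant. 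Uniqueness then amounts to showing any such $\mathcal{S}$ is determined by its value on $\phi_1$ (the function supported on the open cell $BI$), which in turn follows because a right $G(\mathcal{O})$-invariant functional cannot see the lower-dimensional cells independently: averaging over $G(\mathcal{O})$ collapses every $\phi_w$ into the image of $\phi_1$. I would cite Macdonald \cite{macdonald} for this, but the argument over a general base ring $A$ is the same since no positivity or convergence is needed—the relevant integrals are finite sums over $I$-cosets with coefficients in $A$.

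For the Whittaker functional $\mathcal{W}$, the analysis is by restriction to the big cell $B\overline{U}$, which is open and dense, so a function $\phi \in i_B^G(\chi)$ is determined on $B\overline{U}$ by a compactly supported function on $\overline{U}$, and there $\mathcal{W}(\phi) = \int_{\overline{U}} \phi(\bar u)\,\psi(\bar u)^{-1}\,d\bar u$ (again a finite sum in the $I$-fixed setting). The equivariance $\mathcal{W}(\rho(\bar u)\phi) = \psi(\bar u)\mathcal{W}(\phi)$ is immediate from the change of variables. For uniqueness one must show that the contributions of the non-open cells $BwI$ with $w \neq w_0$ are forced to vanish: this is the Gelfand--Kazhdan / Rodier argument that a $(\overline{U},\psi)$-equivariant functional killing the open cell must kill everything, using that on a non-open cell the stabilizer of a point in $\overline{U}$ is a nontrivial unipotent subgroup on which $\psi$ is nontrivial, so any equivariant vector supported there is zero. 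I would invoke Rodier, Theorem 3 of \cite{rodier}, for the precise statement; the extension to coefficients in a commutative $\mathbb{C}$-algebra $A$ is again formal because the whole argument is a finite manipulation of $I$-double cosets and the vanishing statements are about solving linear equations over $A$ with the same (invertible over $\mathbb{C}$, hence over $A$) coefficients.

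The main obstacle, and the only place any real care is needed, is the uniqueness half in the Whittaker case over a general base $A$: one needs to know that the ``killing the non-open cells'' step does not secretly require inverting elements of $A$ or dividing by quantities like $1 - \pi^{-\alpha^\vee}$ that are zero-divisors or non-units in $R = \mathbb{C}[P^\vee]$. The right way around this is to note that in the $I$-fixed setting everything takes place in the finite-rank module $\mathcal{M} = \bigoplus_{w \in \tilde W} A v_w$, and the equivariance conditions become a finite system of $A$-linear relations among the coefficients whose solution space, by flatness of the relevant specialization maps (or simply by exhibiting the solution explicitly over $\mathbb{Z}[q^{\pm 1}][P^\vee]$ and base-changing), is free of rank one over $A$. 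So the cleanest route is: (i) prove existence by writing down $\mathcal{S}$ and $\mathcal{W}$ explicitly as above; (ii) prove uniqueness over $\mathbb{C}$ by citing \cite{macdonald} and \cite{rodier}; (iii) upgrade to general $A$ by observing the solution space of the defining linear conditions is defined over a ground ring and commutes with base change. I expect step (iii) to be short but is the part deserving an explicit remark.
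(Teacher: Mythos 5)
Your outline of the Bruhat-cell / Gelfand--Kazhdan argument is the correct mechanism behind the $\mathbb{C}$-case, and your instinct that the main issue is carrying it over to a general base $A$ is exactly right. But the paper resolves that issue differently, and more cleanly, than your step (iii), and your step (iii) as stated has a real gap.

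The paper's proof is a one-liner: it cites Macdonald and Rodier for $A=\mathbb{C}$ and then observes that those proofs use only Mackey theory for distributions on a $p$-adic group (Rodier's Theorem~4) and Schur's lemma, neither of which is sensitive to replacing $\mathbb{C}$ by an arbitrary commutative $\mathbb{C}$-algebra. In particular the vanishing on lower Bruhat cells comes from pairing against the nontrivial $\mathbb{C}^\times$-valued character $\psi$ restricted to a compact open unipotent subgroup, which vanishes by an elementary root-of-unity cancellation that never touches $A$; Schur's lemma only requires $A$-linearity. Thus one literally reruns the original argument over $A$ rather than reducing to $\mathbb{C}$.

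Your step (iii) instead tries to reduce to $\mathbb{C}$ by a flatness / base-change argument, and here the proposal has problems. First, the Proposition concerns functionals on the full smooth induced representation $i_B^G(\chi)$, not on its $I$-fixed subspace; restricting to ``the $I$-fixed setting'' proves a weaker statement than the one claimed. Second, even on $I$-fixed vectors $\mathcal{M} = \bigoplus_{w\in\tilde W}Av_w$ is not of finite rank (the extended affine Weyl group $\tilde W$ is infinite), so ``a finite system of $A$-linear relations'' is not accurate, and ``the solution space commutes with base change'' is precisely the kind of assertion one cannot take for granted over $R=\mathbb{C}[P^\vee]$, where elements like $1-\pi^{-\alpha^\vee}$ are non-units. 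Knowing uniqueness over $\mathbb{C}$ for every closed point does not in general give freeness of rank one over $R$. What actually makes it work is that the defining conditions for $\mathcal{S}$ and $\mathcal{W}$ are $\mathbb{Z}$-linear constraints coming from the cell decomposition and the character $\psi$, with no parameter-dependent denominators anywhere in the uniqueness argument -- which is exactly the content of the paper's ``same proof applies'' remark, and what your step (iii) would need to make precise to close the gap.
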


\begin{proof} Macdonald and Rodier treat the case where $A = \mathbb{C}$. But in either case, the same proof applies to any commutative $\mathbb{C}$-algebra $A$ as it uses only a version of Mackey theory for distributions on a $p$-adic group (see for example Theorem~4 of \cite{rodier}) and Schur's lemma.
\end{proof}

We take our spherical functional to be, for functions $\phi \in i_B^G(\chi_{\text{univ}}^{-1})$,
$$ \mathcal{S}(\phi) = \int_{G(\mathcal{O})} \phi(k) \, dk, $$
where the Haar measure $dk$ is normalized so that the Iwahori subgroup $I$ has measure 1.

We take our Whittaker functional to be the one whose restriction to functions $\phi \in i_B^G(\chi_{\text{univ}}^{-1})$ supported on the big Bruhat cell is given by
$$ \mathcal{W}(\phi) = \pi^{\rho^\vee} \int_{\overline{U}} \phi(\bar{u}) \psi(\bar{u})^{-1} \, d\bar{u}, $$
where the Haar measure $d\bar{u}$ is normalized so that $\overline{U} \cap I$ has measure 1.  (The integral converges since
$\phi$ has compact support.) This is a slight variant of the one appearing in Corollary 1.8 in \cite{casselman-shalika}.

\goodbreak

\begin{proposition} 
\begin{enumerate}
\item \emph{(Casselman, \cite{casselman})}
\begin{equation} \mathcal{S} \circ \mathcal{A}_{s_\alpha} = (1 - q^{-1} \pi^{\alpha^\vee}) \, (s_\alpha \circ \mathcal{S}). \label{spherintertwiner}\end{equation}
\item \emph{(Casselman-Shalika, Proposition 4.3 of \cite{casselman-shalika})} Let $\mathcal{W}$ be the Whittaker functional defined above. Then 
\begin{equation} \mathcal{W} \circ \mathcal{A}_{s_\alpha} = (\pi^{\alpha^\vee} - q^{-1}) \, (s_\alpha \circ \mathcal{W}). \label{whitintertwiner}\end{equation}
\end{enumerate}
\label{whitspheridentities} \end{proposition}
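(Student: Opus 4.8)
The plan is to reduce both identities to a rank-one computation. By Proposition~\ref{unique}, applied with $A = R$ and $\chi = \chi_{\text{univ}}^{-1}$, the functionals $\mathcal{S}$ and $\mathcal{W}$ are each unique up to an $R^\times$ scalar. The key structural observation is that for a fixed simple root $\alpha$, both sides of each claimed identity are $R$-valued functionals on $i_B^G(\chi_{\text{univ}}^{-1})^I$ transforming the same way under right translation by the relevant subgroup: the left side $\mathcal{S}\circ\mathcal{A}_{s_\alpha}$ is $G(\mathcal{O})$-invariant because $\mathcal{A}_{s_\alpha}$ is a $G$-intertwiner, hence commutes with right translation, so postcomposing the $G(\mathcal{O})$-invariant $\mathcal{S}$ still gives something $G(\mathcal{O})$-invariant; the right side $s_\alpha\circ\mathcal{S}$ is visibly $G(\mathcal{O})$-invariant as well, since $s_\alpha$ acts only on the coefficient ring $R$ and commutes with the $G$-action. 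Similarly in the Whittaker case both $\mathcal{W}\circ\mathcal{A}_{s_\alpha}$ and $s_\alpha\circ\mathcal{W}$ transform by the character $\psi$ of $\overline U$. Therefore by the uniqueness statement there is a scalar $k_\alpha\in R$ (a priori in $\mathrm{Frac}(R)$, but we will see it lies in $R$) with $\mathcal{S}\circ\mathcal{A}_{s_\alpha} = k_\alpha\,(s_\alpha\circ\mathcal{S})$, and likewise for $\mathcal{W}$.

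It then remains to identify $k_\alpha$. First I would reduce to the rank-one group $G_\alpha$ generated by the root subgroups $U_{\pm\alpha}$: the intertwiner $\mathcal{A}_{s_\alpha}$ only involves the $SL_2$ (or $PGL_2$) attached to $\alpha$, and the restriction of $\mathcal{S}$ (resp.\ $\mathcal{W}$) to the corresponding principal series on $G_\alpha$ is again the spherical (resp.\ Whittaker) functional for that rank-one group. So it suffices to compute $k_\alpha$ for $SL_2$. Concretely, one evaluates both sides on a convenient test vector — for the spherical case the spherical vector $\phi^\circ$ restricted to $G_\alpha$, for the Whittaker case a vector supported on the big cell so that the integral $\int_{\overline U_\alpha}\phi(\bar u)\psi(\bar u)^{-1}\,d\bar u$ is an honest convergent integral over $F$ — and matches the two outputs. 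These are exactly the classical $SL_2$ computations of Casselman (for the spherical case: evaluating $\mathcal{I}_{s_\alpha}$ on the spherical vector gives the Macdonald $c$-function factor $\frac{1-q^{-1}\pi^{\alpha^\vee}}{1-\pi^{\alpha^\vee}}$, and multiplying by the normalizing factor $(1-\pi^{\alpha^\vee})$ from \eqref{nodenoms} produces the constant $1-q^{-1}\pi^{\alpha^\vee}$) and of Casselman–Shalika (for the Whittaker case, where the local coefficient computation produces $\pi^{\alpha^\vee}-q^{-1}$). Alternatively — and this is the route I would actually emphasize given the machinery already set up in the excerpt — one can bypass the integral entirely and use \eqref{intertwinerashecke}, which expresses $\mathcal{A}_{s_\alpha}$ in the Hecke algebra as $(1-q^{-1})\pi^{\alpha^\vee} + q^{-1}(1-\pi^{\alpha^\vee})T_{s_\alpha}$; combining this with the known action of $\mathcal{S}$ and $\mathcal{W}$ on the basis $\{v_w\}$ of $\mathcal{M}$ (equivalently, on $\phi_1\ast h$) reduces the verification to a finite linear-algebra identity in $R$ that one checks directly on $v_1$ and $v_{s_\alpha}$.

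The main obstacle is the rank-one Whittaker computation: one must be careful that the Whittaker functional as defined (via integration over $\overline U$ against $\psi^{-1}$, with the $\pi^{\rho^\vee}$ prefactor) is precisely normalized so that the constant comes out as $\pi^{\alpha^\vee}-q^{-1}$ rather than some twist of it, and one must track the normalization of Haar measures on $\overline U_\alpha\cap I$ and on $U_\alpha$ through the change of variables $u\mapsto s_\alpha^{-1}u s_\alpha$ that underlies the intertwiner. The spherical case is comparatively routine since $G(\mathcal{O})$-invariance pins everything down and the only computation is the Gindikin–Karpelevich/Macdonald factor for $SL_2$. In both cases, once the rank-one constant is known, the fact that $k_\alpha\in R$ (not merely in the fraction field) follows because the left-hand side $\mathcal{A}_{s_\alpha}$ preserves $\mathcal{M}$ by \eqref{nodenoms} and $\mathcal{S},\mathcal{W}$ are $R$-valued on $\mathcal{M}$, so $\mathcal{S}\circ\mathcal{A}_{s_\alpha}$ and $s_\alpha\circ\mathcal{S}$ are both honest $R$-valued functionals and their ratio, evaluated on the cyclic generator $v_1$ where $s_\alpha\circ\mathcal{S}$ takes a unit value, lies in $R$.
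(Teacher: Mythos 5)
Your proposal takes essentially the same route as the paper: invoke Proposition~\ref{unique} over the ring $R$ to reduce each identity to the determination of a single scalar $k_\alpha\in R$, then pin down $k_\alpha$ by evaluating on a test vector using the rank-one computations of Casselman and Casselman--Shalika (and in fact the paper's spherical argument is exactly your test-vector idea in a particularly clean form: it applies both sides to $\phi^\circ$, uses Casselman's $\mathcal{A}_{s_\alpha}\phi^\circ=(1-q^{-1}\pi^{\alpha^\vee})\phi^\circ$, and combines with the $W$-invariance of $\mathcal{S}(\phi^\circ)$ so that the $s_\alpha$ on the right drops out; the Whittaker case is handled by quoting Casselman--Shalika / \cite{hkp} and tracking the $\pi^{\rho^\vee}$-normalization, just as you anticipate). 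The alternative purely Hecke-algebraic route you sketch via \eqref{intertwinerashecke} would also work, though it would still require an explicit rank-one evaluation of $\mathcal{S}$, $\mathcal{W}$, and the left $T_{s_\alpha}$-action on the basis $\{v_1,v_{s_\alpha}\}$, so it is not shorter in practice.
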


\begin{proof} For either of the two functionals $\mathcal{L}$, the uniqueness as given in Proposition~\ref{unique} (taking the algebra $A$ there to be $R$) implies that $\mathcal{L} \circ \mathcal{A}_{s_\alpha}$ and $s_\alpha \circ \mathcal{L}$ differ by a constant in $R$. The determination of this constant reduces calculations which are formally the same as those performed in \cite{casselman, casselman-shalika}. 

In the case of the spherical function, this is not explicitly stated in \cite{casselman}, but easily derived from results there. First, the action of $\mathcal{A}_{s_\alpha}$ on the spherical vector $\phi^\circ$ is given in Theorem~3.1 of \cite{casselman} (though note he uses the unnormalized intertwiner, see Lemma 1.13.1(iii) of \cite{hkp} for the statement over $R$):
$$ \mathcal{A}_{s_\alpha} \phi^\circ = (1 - q^{-1} \pi^{\alpha^\vee}) \, \phi^\circ $$ 
Combining this with the fact that $\mathcal{S} (\phi^\circ)(g)$ and $(w \circ \mathcal{S})(\phi^\circ)(g)$ agree as functions on $R^W$ (Proposition 4.1 of \cite{casselman} and Section~5.2 of \cite{hkp}) gives the result. In the case of the Whittaker functional, the statement appears as Proposition 4.3 of \cite{casselman-shalika} for unramified characters taking values in $\mathbb{C}$, and as equation (6.3.2) in \cite{hkp} for $R$-valued characters upon noting their definition of the Whittaker functional $\widetilde{\mathcal{W}}$ (Equation (6.2.1) of \cite{hkp}) differs from ours: $q^{\ell(w_0)} \pi^{\rho^\vee} \widetilde{\mathcal{W}} = \mathcal{W}$. Indeed, (6.3.2) of \cite{hkp} reads
$$ \widetilde{\mathcal{W}} \circ \mathcal{A}_{s_\alpha} = (1 - q^{-1} \pi^{-\alpha^\vee}) s_\alpha \cdot \widetilde{\mathcal{W}} $$
Multiplying both sides by $\pi^{\rho^\vee}$ and noting the identity of left operators $\pi^{\rho^\vee} s_{\alpha} = \pi^{\alpha^\vee} s_\alpha \pi^{\rho^\vee}$ gives (\ref{whitintertwiner}).
\end{proof}

\begin{proof}[Proof of Theorem~\ref{sphericalthm}]
We need to show that the map $\mathcal{S}$ is an intertwiner of left-$\mathcal{H}$ modules from $\mathcal{M} \simeq i_B^G(R)$ to $V = \text{Ind}_{\mathcal{H}_0}^{\mathcal{H}}(\text{triv})$ and that the diagram in (\ref{firstdiagram}) commutes by taking $\mathcal{F}_V: h \longmapsto h \cdot 1$.

Recall that the left action on any $\phi \in \mathcal{M} \simeq i_B^G(R)$ is obtained by writing $\phi = \phi_1 \ast h$ for some $h \in \mathcal{H}$ and acting by $h' \, : \, \phi_1 \ast h \mapsto \phi_1 \ast h' h.$

We first check the diagram commutes on the vector $\phi_1 = \eta(v_1)$ defined in (\ref{phionedef}). As a consequence of the Bruhat decomposition for $G(\mathcal{O})$, $G(\mathcal{O}) \cap BI = I$, so $\mathcal{S}(\phi_1) = 1$. This agrees with $\mathcal{F}_V (v_1) = \mathcal{F}_V (v_1 \ast 1_I)$.

Now both the commutativity of the diagram and the fact that $\mathcal{S}$ is an intertwiner of left $\mathcal{H}$ modules will follow by induction if we can check, on a set of generators $\{ h \}$ for $\mathcal{H}$, that for any $\phi \in \mathcal{M} \simeq i_B^G(R)$,
$$ \mathcal{S} (h \cdot \phi) = h \cdot \mathcal{S}(\phi). $$
Recall that the left $\mathcal{H}$ action on $\mathcal{S}(\phi) \in V_{\text{triv}} \simeq R$ is described in Section~\ref{inducedmodule}. We will check this on the set of generators $\pi^\lambda T_{s_\alpha}$ with $\lambda \in P^\vee$ and $s_\alpha$ a simple root.
The action by $\pi^\lambda$ on either side is by translation, so in fact we can immediately reduce to checking the equality on $T_{s_\alpha}$.

Using (\ref{intertwinerashecke}), we may write
$$ q^{-1} (1 - \pi^{\alpha^\vee}) \mathcal{S} (T_{s_\alpha} \cdot \phi) = \mathcal{S} ( \mathcal{A}_{s_\alpha} \phi) + (q^{-1} - 1) \pi^{\alpha^\vee} \mathcal{S} (\phi). $$
Now from (\ref{spherintertwiner}), $\mathcal{S} \circ \mathcal{A}_{s_\alpha} = (1 - q^{-1} \pi^{\alpha^\vee}) \, (s_\alpha \circ \mathcal{S})$, so
$$ q^{-1} (1 - \pi^{\alpha^\vee}) \mathcal{S} (T_{s_\alpha} \cdot \phi) = (1 - q^{-1} \pi^{\alpha^\vee}) \, s_\alpha \circ \mathcal{S}(\phi) + (q^{-1} - 1) \pi^{\alpha^\vee} \mathcal{S} (\phi). $$
Dividing by $q^{-1} (1 - \pi^{\alpha^\vee})$, the operator acting on $\mathcal{S}(\phi)$ on the right-hand side is
$$ f \longmapsto \frac{q}{1 - \pi^{\alpha^\vee}} \left[ (1 - q^{-1} \pi^{\alpha^\vee}) \, s_\alpha \circ f + (q^{-1} - 1) \pi^{\alpha^\vee} f \right]. $$
Comparing this with (\ref{signaction}) where $\varepsilon(T_{s_\alpha}) = \text{triv}(T_{s_\alpha}) = q$, we see that the operators match
and thus $T_{s_\alpha} \cdot \mathcal{S}(\phi) = \mathcal{S} (T_{s_\alpha} \cdot \phi)$ for any simple reflection $s_\alpha$.
\end{proof}

\begin{proof}[Proof of Theorem~\ref{whittakerthm}]
The proof follows the same recipe as that of Theorem~\ref{sphericalthm}. We must show that the map $\mathcal{W}$ is an intertwiner of left-$\mathcal{H}$ modules from $\mathcal{M} \simeq i_B^G(\chi_{\text{univ}}^{-1})$ to $V = \text{Ind}_{\mathcal{H}_0}^{\mathcal{H}}(\text{sign})$ and that the diagram in (\ref{firstdiagram}) commutes by taking $\mathcal{F}_V: h \longmapsto h \cdot \pi^{\rho^\vee}$.

We again begin by checking that the diagram commutes on the vector $\phi_1 = \eta(v_1)$ defined in (\ref{phionedef}). It is a simple consequence of the Iwahori factorization $I = U ( \mathfrak{p}) T ( \mathcal{O}) \overline{U} ( \mathcal{O})$ that $\overline{U}(F) \cap BI = \overline{U}(F) \cap I$, so $\mathcal{W}(\phi_1) = \pi^{\rho^\vee}$. This agrees with $\mathcal{F}_V (v_1) = \mathcal{F}_V (v_1 \ast 1_I) = \pi^{\rho^\vee}$.

The commutativity of the diagram and the fact that $\mathcal{W}$ is an intertwiner of left $\mathcal{H}$ modules will follow by induction if we can check, on a set of generators $\{ h \}$ for $\mathcal{H}$, that for any $\phi \in \mathcal{M} \simeq i_B^G(\chi_{\text{univ}}^{-1})$,
$$ \mathcal{W} (h \cdot \phi) = h \cdot \mathcal{W}(\phi) $$
with the left $\mathcal{H}$ action on $\mathcal{W}(\phi) \in V_{\text{sign}} \simeq R$ as described in Section~\ref{inducedmodule}. We check this on the set of generators $\pi^\lambda T_{s_\alpha}$ with $\lambda \in P^\vee$ and $s_\alpha$ a simple root.
The action by $\pi^\lambda$ on either side is by translation, so in fact we can immediately reduce to checking the equality on $T_{s_\alpha}$.

Using (\ref{intertwinerashecke}), we may write
$$ q^{-1} (1 - \pi^{\alpha^\vee}) \mathcal{W} (T_{s_\alpha} \cdot \phi) = \mathcal{W} ( \mathcal{A}_{s_\alpha} \phi) + (q^{-1} - 1) \pi^{\alpha^\vee} \mathcal{W} (\phi). $$
Now from (\ref{whitintertwiner}), $\mathcal{W} \circ \mathcal{A}_{s_\alpha} = (\pi^{\alpha^\vee} - q^{-1}) \, (s_\alpha \circ \mathcal{W})$, so
$$ q^{-1} (1 - \pi^{\alpha^\vee}) \mathcal{W} (T_{s_\alpha} \cdot \phi) = (\pi^{\alpha^\vee} - q^{-1}) \, s_\alpha \circ \mathcal{W}(\phi) + (q^{-1} - 1) \pi^{\alpha^\vee} \mathcal{W} (\phi). $$
Dividing by $q^{-1} (1 - \pi^{\alpha^\vee})$, the operator acting on $\mathcal{W}(\phi)$ on the right-hand side is
$$ f \longmapsto \frac{q}{1 - \pi^{\alpha^\vee}} \left[ (\pi^{\alpha^\vee} - q^{-1}) \, s_\alpha \circ f + (q^{-1} - 1) \pi^{\alpha^\vee} f \right]. $$
Comparing this with (\ref{signaction}) where $\varepsilon(T_{s_\alpha}) = \text{sign}(T_{s_\alpha}) = -1$, we see that the operators match
and thus $T_{s_\alpha} \cdot \mathcal{W}(\phi) = \mathcal{W} (T_{s_\alpha} \cdot \phi)$ for any simple reflection $s_\alpha$.
\end{proof}

\section{Bessel models for $SO_{2n+1}(F)$ \label{bessel}}

We begin by describing a family of models on odd orthogonal groups, termed ``Bessel models'' by Novodvorsky and Piatetski-Shapiro.
Let 
$$ SO_{2n+1}(F) := \{ g \in SL_{2n+1}(F) \; \mid \; Q(gx, gy) = Q(x,y) \; \forall \, x, y \in F^{2n+1} \}, $$
where the quadratic form $Q$ is defined by
$$ Q(x,y) = \sum_{i=1}^{2n+1} x_i y_{2n+2-i}. $$ 
Let $P$ be the maximal parabolic subgroup corresponding to the short simple root, with corresponding unipotent
radical $U_P$ and its opposite unipotent $\bar{U}_P$. Thus, $\bar{U}_P$ may be realized as the set of lower triangular matrices 
in $SO_{2n+1}(F)$ whose central $SO_3$ block is the identity matrix. Computing the commutator of $\bar{U}_P$, we find that characters of $\bar{U}_P$ are supported
at one parameter subgroups with entries $u_{2,1}, \ldots, u_{n,n-1}$ and $u_{n+1,n-1}, u_{n+2,n-1}.$ Equivalence classes of characters of $\bar{U}_P$ under conjugation 
depend on the action of this central $SO_3$ block on the three root subgroups at $u_{n,n-1}, u_{n+1,n-1},$
and $u_{n+2,n-1}$, isomorphic to $F^3$. Accordingly, we choose a triple of integers $(a,b,c) \in \mathcal{O}^3$ such that $b^2+2ac \ne 0$
and define a character $\psi_{a,b,c}$ on $\bar{U}_P$ by
$$ \psi_{a,b,c}(u) = e(u_{2,1} + \cdots + u_{n-1,n-2} + a u_{n,n-1} + b u_{n+1,n-1} + c u_{n+2,n-1}), $$
where $e(\cdot)$ is a character of $F$ with conductor $\mathcal{O}$.

The stabilizer of $\psi := \psi_{a,b,c}$ under the conjugation action of $G$ is a one-dimensional torus $T_\psi$ embedded in the central $SO_3$ block. It may
be split or non-split according to the choice of $(a,b,c)$ and this distinction is the only dependence on the triple $(a,b,c)$ in the matrix coefficients
we will compute. In either case, we may extend the character $\psi$ on $\bar{U}_P$ to the subgroup $T_\psi \bar{U}_P$ via
any character of $T_\psi$. We choose to extend by the trivial character of $T_\psi$. Given a commutative $\mathbb{C}$-algebra $A$, a Bessel functional 
$\mathcal{B}$ for $SO_{2n+1}(F)$ and character $\psi_{a,b,c}$ of $\bar{U}_P$ is an $A$-module homomorphism on
an $SO_{2n+1}(F)$-module $(\rho, V)$ such that
$$ \mathcal{B}(\rho(tu) \cdot v) = \psi_{a,b,c}(u) \mathcal{B}(v) \quad \text{for all $v \in V$, and all $t \in T, u \in \bar{U}_P.$} $$

\begin{proposition}[Novodvorsky \cite{novodvorsky}, Friedberg-Goldberg \cite{friedberg-goldberg}] For any commutative $\mathbb{C}$-algebra $A$, fix a character $\chi : T \longrightarrow A^\times$.
Then the space of Bessel functionals for $\psi := \psi_{a,b,c}$ on $\text{i}_B^G(\chi)$ is one dimensional and there exists a unique such Bessel functional whose restriction to functions
in $i_B^G(\chi)$ with support in the ``big cell'' $P \bar{U}_P$ is given by
$$ \phi \longmapsto \int_{T_\psi(\mathfrak{o})} \int_{\bar{U}_P} \phi(t \bar{u}) \psi_{a,b,c}^{-1}(u) \, du \, dt, $$
where $T_\psi(\mathcal{O}) = T_\psi \cap G(\mathcal{O})$. 
\label{besseluniqueness} \end{proposition}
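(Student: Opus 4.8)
The plan is to mirror the structure of the proof of Proposition~\ref{unique}: reduce the uniqueness claim to a Mackey-theoretic count of orbits together with Schur's lemma, and then exhibit the distinguished functional by direct computation on the big-cell piece. First I would set up the geometry. The group $S := T_\psi \bar U_P$ acts on the flag variety $B\backslash G$ on the right, or equivalently one studies the $(B,S)$-double cosets in $G$. Since both $B$ and $S$ are products of a torus part and a unipotent part, the orbit space $B\backslash G / S$ is finite, indexed combinatorially (Bruhat-style) by a subset of Weyl-group/parabolic double cosets. For each such double coset $BwS$ one has the standard exact-sequence/filtration argument: a Bessel functional on $i_B^G(\chi)$ gives, by restriction to sections supported near the closed orbits, a sequence of obstructions living in spaces of the form $\mathrm{Hom}_{S}(\mathrm{ind}(\chi\delta^{1/2}|_{\cdot}),\psi_{a,b,c})$ attached to each orbit, and one must show that exactly one orbit — the open (big-cell) orbit $P\bar U_P$ — contributes a one-dimensional space and all the others contribute zero.

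The key steps, in order, are: (1) enumerate the $B$-orbits on $G/S$ and identify the open one with the big cell $P\bar U_P$; (2) for the open orbit, compute the relevant $\mathrm{Hom}$ space and show it is one-dimensional — this is where the condition $b^2 + 2ac \neq 0$ (non-degeneracy of the character on the three-dimensional root space, equivalently the genericity of $\psi$ with respect to the $SO_3$ block) enters, guaranteeing that the character $\psi_{a,b,c}$ restricted to the stabilizer-unipotent is nontrivial on the directions transverse to $T_\psi$; (3) for each non-open orbit, show the analogous $\mathrm{Hom}$ space vanishes, again using nondegeneracy of $\psi$ to kill the contribution (a character of a unipotent subgroup that is nontrivial on some one-parameter subgroup admits no invariant functional supported there); (4) invoke Schur's lemma over the commutative $\mathbb{C}$-algebra $A$, exactly as in the proof of Proposition~\ref{unique}, to promote the $\mathbb{C}$-case computation of Novodvorsky and Friedberg--Goldberg to arbitrary $A$; and (5) check that the explicit integral
$$ \phi \longmapsto \int_{T_\psi(\mathfrak{o})} \int_{\bar U_P} \phi(t\bar u)\,\psi_{a,b,c}^{-1}(u)\,du\,dt $$
converges on functions supported in $P\bar U_P$ (compact support of $\phi$ makes the $\bar U_P$-integral a finite integral, and $T_\psi(\mathcal O)$ is compact) and genuinely transforms by $\psi_{a,b,c}$ under left translation by $T_\psi\bar U_P$ — a short change-of-variables verification — so that it is a nonzero element of the one-dimensional space, hence a basis.

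The main obstacle I expect is step (3): controlling the contributions of the non-open $B$-orbits. Unlike the Whittaker case, where $\psi$ is nondegenerate on all of $\overline U$ and the vanishing on lower cells is nearly automatic, here $\psi_{a,b,c}$ is only "partially" nondegenerate — it is trivial on the torus $T_\psi$ and on the $SO_3$ block is generic only in the sense $b^2+2ac\neq 0$ — so one must verify orbit-by-orbit that on each smaller Bruhat cell the character $\psi_{a,b,c}$ remains nontrivial on some unipotent direction not absorbed into the stabilizer. This is essentially the content of the commutator computation sketched before the proposition, which identifies the support of characters of $\bar U_P$ at the parameters $u_{2,1},\dots,u_{n,n-1}$ and $u_{n+1,n-1}, u_{n+2,n-1}$; carrying that local analysis through all the relevant orbits is the technical heart of the argument, and it is precisely the point where one leans on the cited work of Novodvorsky and Friedberg--Goldberg rather than redoing it from scratch.
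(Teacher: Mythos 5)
Your proposal is consistent with the paper's proof, which simply cites the main theorem of Novodvorsky for uniqueness and Proposition~3.5 of Friedberg--Goldberg for existence, and then observes (as in the proof of Proposition~\ref{unique}) that those arguments rely only on Mackey theory for distributions on a $p$-adic group together with Schur's lemma, and therefore carry over verbatim from $\mathbb{C}$ to any commutative $\mathbb{C}$-algebra $A$. Your sketch of the $B$-orbit filtration, the role of the nondegeneracy condition $b^2+2ac\ne 0$, the Schur-lemma promotion to $A$-coefficients, and the final deferral to the cited references for the orbit-by-orbit vanishing all correctly describe the argument the paper leans on; the paper is simply terser, leaving that structure implicit in the citation.
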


\begin{proof} The uniqueness is given in the main theorem of \cite{novodvorsky} and existence is shown in Proposition~3.5 of \cite{friedberg-goldberg}. Again, the proofs of these results are for a $\mathbb{C}$-valued functional, but apply unchanged if we replace $\mathbb{C}$ by any commutative $\mathbb{C}$-algebra.
\end{proof}

We now choose $\psi$ on $\bar{U}_P$ such that $T_\psi$ is non-split, and refer to Bessel models of this type as the ``non-split'' case.

Let $\varepsilon$ be the character of the Hecke algebra $\mathcal{H}_0$ of Cartan type $B$ which acts on simple long roots by $-1$ and on simple short roots
by $q$. According to our earlier notation $\Phi^+_{-1}(\varepsilon)$, defined as the set of positive roots
equal in length to simple roots $\alpha$ for which $T_{s_\alpha}$ acts by -1, is just the set of long roots. Recall the
definition of $\rho^\vee_\varepsilon$ given in (\ref{generalrhochicken}).
Then with character $\chi_{univ}^{-1} : T \longrightarrow R^\times$, we choose the unique Bessel functional whose restriction to functions supported on the big cell $P \bar{U}_P$ is given by
\begin{equation} \mathcal{B}(\phi) = \pi^{\rho_{\varepsilon}^\vee} \int_{T_\psi(\mathcal{O})} \int_{\bar{U}_P} \phi(t \bar{u}) \psi_{a,b,c}^{-1}(u) \, du \, dt, \label{besselnormalization} \end{equation}
with Haar measure normalized so that $T_\psi(\mathcal{O}) \bar{U}_P \cap I$ has measure 1.

\begin{proposition}
In the non-split case, with $\mathcal{B}$ normalized as in (\ref{besselnormalization}),
\begin{equation} \mathcal{B} \circ \mathcal{A}_{s_\alpha} = (1 - q^{-1} \pi^{\alpha^\vee}) \, (s_\alpha \circ \mathcal{B}) \quad \text{if $\alpha$ is a short root.} \label{besselintertwinershort}\end{equation}
and
\begin{equation} \mathcal{B} \circ \mathcal{A}_{s_\alpha} = (\pi^{\alpha^\vee} - q^{-1}) \, (s_\alpha \circ \mathcal{B}) \quad \text{if $\alpha$ is a long root.} \label{besselintertwinerlong}\end{equation}
\label{besselintertwinercases} \end{proposition}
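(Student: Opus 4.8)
The plan is to follow the same strategy used for the spherical and Whittaker functionals in Proposition~\ref{whitspheridentities}, exploiting the uniqueness statement of Proposition~\ref{besseluniqueness}. For a simple root $\alpha$ of $SO_{2n+1}$, the composite $\mathcal{B} \circ \mathcal{A}_{s_\alpha}$ is again an $R$-valued Bessel functional (with respect to the same $\psi_{a,b,c}$), since the intertwiner $\mathcal{A}_{s_\alpha}$ commutes with the right $G$-action and hence preserves the defining transformation property under $T_\psi \bar{U}_P$. Likewise $s_\alpha \circ \mathcal{B}$ is a Bessel functional, because applying the ring automorphism $s_\alpha$ to the $R$-valued output is compatible with the equivariance conditions. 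By one-dimensionality of the space of such functionals over $R$, there is a constant $k_\alpha \in R$ with $\mathcal{B} \circ \mathcal{A}_{s_\alpha} = k_\alpha \, (s_\alpha \circ \mathcal{B})$; the content of the proposition is the explicit determination of $k_\alpha$ in the two cases.

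To pin down $k_\alpha$, I would reduce to a rank-one computation. The simple roots of type $B_n$ come in two flavors, and restricting the principal series and the intertwiner to the relevant rank-one Levi subgroup attached to $s_\alpha$ localizes the calculation. For a \emph{short} simple root $\alpha$, the corresponding $SL_2$ (up to isogeny) sits inside the ``$SO_3$-part'' in a way compatible with the parabolic $P$ used to define the Bessel model; in fact on this rank-one piece the Bessel integral degenerates to an integral over $\bar U_P$ against $\psi_{a,b,c}$, which after the normalization by $\pi^{\rho^\vee_\varepsilon}$ behaves exactly as the Whittaker integral normalized by $\pi^{\rho^\vee}$ did in the proof of \eqref{whitintertwiner} --- here I would track carefully that $\rho^\vee_\varepsilon$ restricted to this root subsystem plays the role that $\rho^\vee$ did, so that the computation of Casselman--Shalika (Proposition 4.3 of \cite{casselman-shalika}) applies verbatim and gives $k_\alpha = \pi^{\alpha^\vee} - q^{-1}$. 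Wait --- for the short root the claimed constant is $1 - q^{-1}\pi^{\alpha^\vee}$, the \emph{spherical}-type constant; so on the rank-one piece attached to a short simple root the relevant integral is in fact unramified in the $\alpha$-direction (the character $\psi_{a,b,c}$ is trivial on that root subgroup, since short roots contribute to the $SO_3$-block stabilized by $T_\psi$), and the computation reduces to Casselman's spherical calculation behind \eqref{spherintertwiner}. For a \emph{long} simple root $\alpha$, the root subgroup does appear in the support of $\psi_{a,b,c}$, the rank-one Bessel integral is a genuine Jacquet-type integral against a nontrivial additive character, and the Casselman--Shalika rank-one computation yields $k_\alpha = \pi^{\alpha^\vee} - q^{-1}$, matching \eqref{besselintertwinerlong}.

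Concretely the steps are: (i) invoke uniqueness to get $\mathcal{B}\circ\mathcal{A}_{s_\alpha} = k_\alpha\,(s_\alpha\circ\mathcal{B})$ with $k_\alpha\in R$; (ii) evaluate both sides on a convenient test vector supported on the big cell $P\bar U_P$ (e.g. the translate of $\phi_1$), reducing the identity to a one-variable integral in the $\alpha$-direction; (iii) for $\alpha$ short, observe $\psi_{a,b,c}$ is trivial on the pertinent one-parameter subgroup, so the integral is the spherical rank-one integral and $k_\alpha = 1 - q^{-1}\pi^{\alpha^\vee}$; (iv) for $\alpha$ long, the integral is a nondegenerate Whittaker-type rank-one integral, so the Casselman--Shalika computation gives $k_\alpha = \pi^{\alpha^\vee}-q^{-1}$, where the bookkeeping of the normalizing factor $\pi^{\rho^\vee_\varepsilon}$ versus $\pi^{\rho^\vee}$ uses the same left-operator identity $\pi^{\rho^\vee_\varepsilon} s_\alpha = \pi^{\langle\rho^\vee_\varepsilon,\,\cdot\,\rangle}\,s_\alpha\,\pi^{\rho^\vee_\varepsilon}$ as in the Whittaker case (for a long $\alpha$, $s_\alpha(\rho^\vee_\varepsilon) = \rho^\vee_\varepsilon - \alpha^\vee$). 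I expect the main obstacle to be step (ii)--(iii): verifying cleanly that for a short simple root the Bessel integral genuinely decouples in the $\alpha$-direction into an unramified (spherical-type) rank-one integral --- this requires checking that the relevant root subgroup lies in the kernel of $\psi_{a,b,c}$ and interacts correctly with $T_\psi(\mathcal{O})$ and the Iwahori factorization, rather than any delicate new analysis. Once that structural fact is in hand, both constants drop out of the already-available rank-one computations in \cite{casselman} and \cite{casselman-shalika}.
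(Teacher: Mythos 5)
Your uniqueness step (getting the scalar $k_\alpha \in R$ from Proposition~\ref{besseluniqueness}) matches the paper exactly. Where you diverge is in determining $k_\alpha$: you propose a direct rank-one reduction of the Bessel integral, while the paper evaluates both sides of the identity on the single test vector $\phi^\circ$, citing Theorem~1.5 of \cite{bff} for the closed form of $\mathcal{B}(\phi^\circ) = \pi^{-\rho_\varepsilon^\vee}\prod_{\alpha\in\Phi^+_{-1}(\varepsilon)}(1-q^{-1}\pi^{\alpha^\vee})$ together with $\mathcal{A}_{s_\alpha}\phi^\circ = (1-q^{-1}\pi^{\alpha^\vee})\phi^\circ$. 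The paper's route is much lighter: the constant $k_\alpha$ drops out of the ratio $\mathcal{B}(\phi^\circ) \big/ (s_\alpha\circ\mathcal{B})(\phi^\circ)$, which is a one-line symbolic computation once the $\phi^\circ$-formula is in hand, with no new integrals.

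There is a real gap in your rank-one argument for the \emph{short} simple root, and it is exactly the ``structural fact'' you flagged as the main obstacle. The short simple root $\alpha_n = e_n$ of $SO_{2n+1}$ has its root subgroups $U_{\pm\alpha_n}$ inside the central $SO_3$ block. But $\bar{U}_P$ consists of elements whose central $SO_3$ block is the identity, so $U_{\pm\alpha_n}$ does not lie in $\bar{U}_P$ at all --- ``$\psi_{a,b,c}$ is trivial on the pertinent one-parameter subgroup'' is vacuous for the wrong reason. What actually controls the rank-one behavior at the short root is the integral over $T_\psi(\mathcal{O})$, a period over a compact subgroup of the $SO_3$-block. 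This is a (non-split) torus period, not a spherical ($SO_3(\mathcal{O})$-invariant) period, so identifying the local constant with Casselman's spherical constant $1-q^{-1}\pi^{\alpha^\vee}$ requires a separate argument. Related to this: your proposed proof never uses the non-split hypothesis. That is a red flag, because in the split case $T_\psi$ is a one-dimensional split torus, $T_\psi(\mathcal{O}) \subsetneq T_\psi(F)$, and the rank-one period at the short root produces a different local factor (hence a different $k_\alpha$). So step (iii) as written would ``prove'' the same constant for split Bessel models, where it is false. The paper sidesteps all this because the dependence on non-splitness is already built into the cited formula for $\mathcal{B}(\phi^\circ)$ from \cite{bff}. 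To repair your route you would need to carry out the genuine rank-one $SO_3$ period with respect to the anisotropic torus $T_\psi$ (or cite a reference that does), rather than appeal to the spherical computation in \cite{casselman}.
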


In light of Proposition~\ref{whitspheridentities}, we see that $\mathcal{B}$ acts like the spherical functional at short roots and like the Whittaker functional at long roots.

\begin{proof} Uniqueness of the functional, as proved in Proposition~\ref{besseluniqueness}, guarantees that whether $\alpha$ is a short or long simple root, there exists a constant $c_\alpha \in R$
such that
$$ \mathcal{B} \circ \mathcal{A}_{s_\alpha} = c_\alpha (s_\alpha \circ \mathcal{B}). $$
So it remains to determine this constant $c_\alpha$ by evaluating both sides on any function in $i_B^G(\chi)$.
As a special case of Theorem~1.5 of \cite{bff}, with $\phi^\circ$ the spherical vector in $i_B^G(\chi_{\text{univ}}^{-1})$,
$$ \mathcal{B}(\phi^\circ) = \pi^{-\rho_\epsilon^\vee} \prod_{\alpha \in \Phi_{-1}^+(\varepsilon)} (1 - q^{-1} \pi^{\alpha^\vee}), \quad \text{where $\Phi_{-1}^+(\varepsilon)$ denotes the long positive roots,} $$
for any non-split Bessel functional $\mathcal{B}$ (again over $\mathbb{C}$ but equally applicable to $R$).
This, together with the $R$-valued version of Casselman's intertwiner formula as in Lemma 1.13.1(iii) of \cite{hkp} for
simple reflections $s_\alpha$:
\begin{equation} \mathcal{A}_{s_\alpha} \phi^\circ = (1 - q^{-1} \pi^{\alpha^\vee}) \, \phi^\circ \label{interonspher} \end{equation}
will allow us to obtain the result. Indeed, a short simple reflection $s_\alpha$ leaves $\mathcal{B}(\phi^\circ)$ invariant and so we immediately conclude from
(\ref{interonspher}) that
$$ \mathcal{B} (\mathcal{A}_{s_\alpha} \phi^\circ) = (1 - q^{-1} \pi^{\alpha^\vee})(s_\alpha \circ \mathcal{B}) (\phi^\circ) \quad \text{if $\alpha$ is short.} $$
But for long simple reflections $s_\alpha$,
$$ (s_\alpha \circ \mathcal{B})(\phi^\circ) = \frac{\pi^{\alpha^\vee} - q^{-1}}{1 - q^{-1} \pi^{\alpha^\vee}} \, \mathcal{B}(\phi^\circ), $$
so applying (\ref{interonspher}) gives the result.
\end{proof}

\begin{proof}[Proof of Theorem~\ref{besselthm}] The proof follows the now familiar form of the proofs of Theorems~\ref{sphericalthm} and~\ref{whittakerthm},
as we have assembled the necessary ingredients in the prior two results. Recall that we wish to show that the map $\mathcal{B}$ defined in (\ref{besselnormalization}) is an intertwiner of left-$\mathcal{H}$ modules from $\mathcal{M} \simeq i_B^G(\chi_{\text{univ}}^{-1})^I$ to $V = \text{Ind}_{\mathcal{H}_0}^{\mathcal{H}}(\varepsilon)$ where $\varepsilon$ is the character of 
$\mathcal{H}_0$ acting by $-1$ on long simple roots and $q$ on short simple roots. And moreover, that the diagram in (\ref{firstdiagram}) commutes by taking $\mathcal{F}_V: h \longmapsto h \cdot \pi^{\rho_\varepsilon^\vee}$, with $\rho_\varepsilon^\vee$ defined as in (\ref{besselrho}).

To check that the diagram commutes on the vector $\phi_1 = \eta(v_1)$ defined in (\ref{phionedef}), note that $T_\psi(\mathcal{O}) \bar{U}_P \cap BI = T_\psi(\mathcal{O}) \bar{U}_P \cap I$, which we have normalized to have measure 1. Thus $\mathcal{B}(\phi_1) = \pi^{\rho_\varepsilon^\vee}$, which agrees with $\mathcal{F}_V (v_1) = \mathcal{F}_V (v_1 \ast 1_I)$.

As in our earlier proofs, the commutativity of the diagram and the fact that $\mathcal{B}$ is an intertwiner of left $\mathcal{H}$ modules will follow by induction if we can check, on a set of generators $\{ h \}$ for $\mathcal{H}$, that for any $\phi \in \mathcal{M} \simeq i_B^G(\chi_{\text{univ}}^{-1})^I$,
$$ \mathcal{B} (h \cdot \phi) = h \cdot \mathcal{B}(\phi). $$
Recall that the left $\mathcal{H}$ action on $\mathcal{B}(\phi) \in V_{\varepsilon} \simeq R$ is described in Section~\ref{inducedmodule} according to root lengths. We will check this on the set of generators $\pi^\lambda T_{s_\alpha}$ with $\lambda \in P^\vee$ and $s_\alpha$ a simple root.
Since the action by $\pi^\lambda$ on both sides of the desired identity is by translation, we immediately reduce to checking the equality on $T_{s_\alpha}$.

Using (\ref{intertwinerashecke}), we may write
$$ q^{-1} (1 - \pi^{\alpha^\vee}) \mathcal{B} (T_{s_\alpha} \cdot \phi) = \mathcal{B} ( \mathcal{A}_{s_\alpha} \phi) + (q^{-1} - 1) \pi^{\alpha^\vee} \mathcal{B} (\phi). $$
Now applying Proposition~\ref{besselintertwinercases}, 
$$ q^{-1} (1 - \pi^{\alpha^\vee}) \mathcal{B} (T_{s_\alpha} \cdot \phi) = \begin{cases} (1 - q^{-1} \pi^{\alpha^\vee}) \, s_\alpha \circ \mathcal{B}(\phi) + (q^{-1} - 1) \pi^{\alpha^\vee} \mathcal{B} (\phi) & \text{$\alpha$ : short} \\ 
(\pi^{\alpha^\vee} - q^{-1}) \, s_\alpha \circ \mathcal{B}(\phi) + (q^{-1} - 1) \pi^{\alpha^\vee} \mathcal{B} (\phi) & \text{$\alpha$ : long} \end{cases}$$
Dividing by $q^{-1} (1 - \pi^{\alpha^\vee})$, the operator acting on $\mathcal{B}(\phi)$ on the right-hand side is
$$ f \longmapsto \frac{q}{1 - \pi^{\alpha^\vee}} \begin{cases} (1 - q^{-1} \pi^{\alpha^\vee}) \, \cdot s_\alpha \circ f + (q^{-1} - 1) \pi^{\alpha^\vee} f & \text{$\alpha$ : short} \\  (\pi^{\alpha^\vee} - q^{-1}) \, s_\alpha \circ f + (q^{-1} - 1) \pi^{\alpha^\vee} f & \text{$\alpha$ : long}. \end{cases} $$
Comparing this with (\ref{signaction}) where $\varepsilon(T_{s_\alpha})$ acts according to root length of the simple root, we see that the operators match in both cases
and thus $T_{s_\alpha} \cdot \mathcal{B}(\phi) = \mathcal{B} (T_{s_\alpha} \cdot \phi)$ for any simple reflection $s_\alpha$.
\end{proof}

\section{Formulas for distinguished vectors \label{alternatorformulas}}

In this section, we prove results about the image of Iwahori-fixed vectors $\phi_w$ for $w \in W$ under Hecke algebra intertwiners from 
universal principle series $\mathcal{M}$ to a left $\mathcal{H}$-module. 
Recall that we've defined $\phi_w$, an element of $i_B^G(\chi_{\text{univ}}^{-1})$, by $\phi_w := \eta(v_w)$ under the isomorphism 
$\eta : \mathcal{M} \simeq i_B^G(\chi_{\text{univ}}^{-1})^I$ where $v_w$ is the characteristic function on $T(\mathcal{O}) U w I$ in $\mathcal{M}$.
Moreover, we have defined the spherical vector $\phi^\circ$ to be
\begin{equation} \phi^\circ := \sum_{w \in W} \phi_w. \label{sphericalvectordef} \end{equation}
We now explain how the commutativity of the diagram of Hecke algebra modules allows us to evaluate these distinguished vectors at
any anti-dominant integral element in $T(F) / T(\mathcal{O})$.

The setting in which we prove our results is by now familiar. In particular, let $V$ be an arbitrary left $\mathcal{H}$-module
and let $\mathcal{L}_V$ be the $V$-valued functional on 
$i_B^G(\chi_{\text{univ}}^{-1})^I \simeq \mathcal{M}$ for which the diagram~(\ref{firstdiagram}) commutes with 
respect $V$ and the intertwining map $\mathcal{F}_V : \mathcal{M} \simeq \mathcal{H} \longrightarrow V$ given by
$h \longmapsto h \cdot v$ for some $v \in V$.

Suppose further that $\mathcal{L}_V$ has an alternate characterization on $i_B^G(\chi_{\text{univ}}^{-1})$ according to a 
subgroup $S \subset G$ and a representation $\sigma$ of $S$ on $V$. More precisely, assume that $\mathcal{L}_V$ satisfies
$$ \mathcal{L}_V (\rho(sg) \cdot \phi) = \sigma(s) \cdot \mathcal{L}_V( \rho(g) \cdot \phi) \quad \text{for all $s \in S, g \in G$}, $$
where $\rho$ denotes the regular represention of $G$ on $i_B^G(\chi_{\text{univ}}^{-1})$.

\begin{proposition} Let $(S, \sigma)$ be the subgroup and representation associated to $\mathcal{L}_V$ as above. Suppose that 
\begin{enumerate}
\item that the representation $\sigma$ restricted to $I \cap S$ is trivial, \hfill \\
and
\item for a fixed dominant coweight $\lambda$,
\begin{equation} I \pi^\lambda I = I \pi^\lambda (I \cap S). \label{intersectionassumption} \end{equation}
\end{enumerate}
Then
$$ \mathcal{L}_V (\rho(\pi^{-\lambda}) \cdot \phi_w) = \frac{1}{| I \pi^\lambda I |} T_w  \pi^{\lambda} \cdot v, $$
where $\rho(\pi^{-\lambda})$ is the right regular representation action by $\pi^{-\lambda}$ and $| I \pi^\lambda I |$ is the Haar measure
of the double coset $I \pi^\lambda I$. The action $T_w \pi^\lambda \cdot v$ in the above equality is the
left action on the vector $v$ appearing in the definition of $\mathcal{F}_V$.
\label{toruseval}
\end{proposition}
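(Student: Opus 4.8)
The plan is to route the claimed identity through the commutative diagram~(\ref{firstdiagram}). The obstruction to applying that diagram directly is that $\rho(\pi^{-\lambda})\cdot\phi_w$ is not $I$-fixed, and the hypotheses on $(S,\sigma)$ are exactly what bridge this gap. Concretely, I would evaluate $\mathcal{L}_V$ on the $I$-fixed vector $\int_{I\pi^{-\lambda}I}\rho(x)\cdot\phi_w\,dx$ in two ways and compare.

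First, via the diagram. Under the identification $\mathcal{M}\simeq\mathcal{H}$ of right $\mathcal{H}$-modules, $v_w$ corresponds to $T_w$ for $w\in W$, and for a dominant coweight $\lambda$ the element $\pi^\lambda\in R\subset\mathcal{H}$ is the characteristic function $1_{I\pi^\lambda I}$ (as noted in Theorem~\ref{alternatorformula}); hence $v_w\ast\pi^\lambda$ corresponds to the product $T_w\pi^\lambda$ in $\mathcal{H}$ and $\mathcal{F}_V(v_w\ast\pi^\lambda)=T_w\pi^\lambda\cdot v$. The key computation at this stage is to unwind the definition of $\eta$ and check that
\[\eta(v_w\ast\pi^\lambda)=\int_{I\pi^{-\lambda}I}\rho(x)\cdot\phi_w\,dx,\]
an $I$-fixed vector in $i_B^G(\chi_{\text{univ}}^{-1})$; the \emph{reflected} double coset $I\pi^{-\lambda}I$ (rather than $I\pi^{\lambda}I$) appears because transporting the right $\mathcal{H}$-action through $\eta$ introduces the anti-automorphism $h(x)\mapsto h(x^{-1})$ of $\mathcal{H}$, which carries $1_{I\pi^\lambda I}$ to $1_{I\pi^{-\lambda}I}$; a comparison of supports confirms the sign. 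Since the vector is $I$-fixed, commutativity of~(\ref{firstdiagram}) gives
\[\mathcal{L}_V\Bigl(\int_{I\pi^{-\lambda}I}\rho(x)\cdot\phi_w\,dx\Bigr)=T_w\pi^\lambda\cdot v.\]

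Second, via the transformation law of $\mathcal{L}_V$ under $S$, using its extension to all of $i_B^G(\chi_{\text{univ}}^{-1})$. Taking inverses in hypothesis~(2) gives $I\pi^{-\lambda}I=(I\cap S)\pi^{-\lambda}I$, so one may write $I\pi^{-\lambda}I=\bigsqcup_{s\in\Sigma}s\pi^{-\lambda}I$ as a disjoint union of left $I$-cosets with representatives $\Sigma\subset I\cap S$, where $|\Sigma|=|I\pi^{-\lambda}I|=|I\pi^{\lambda}I|$ (the two double cosets have equal volume, since $x\mapsto x^{-1}$ preserves Haar measure). As $\phi_w$ is right $I$-invariant, $\rho(s\pi^{-\lambda}k)\cdot\phi_w=\rho(s\pi^{-\lambda})\cdot\phi_w$ for $k\in I$, so, with $|I|=1$,
\[\int_{I\pi^{-\lambda}I}\rho(x)\cdot\phi_w\,dx=\sum_{s\in\Sigma}\rho(s\pi^{-\lambda})\cdot\phi_w.\]
Applying $\mathcal{L}_V$, then its $S$-equivariance $\mathcal{L}_V(\rho(s\pi^{-\lambda})\cdot\phi_w)=\sigma(s)\,\mathcal{L}_V(\rho(\pi^{-\lambda})\cdot\phi_w)$, and then hypothesis~(1) (so $\sigma(s)$ acts as the identity for $s\in I\cap S$), the sum collapses and the second evaluation equals $|I\pi^{\lambda}I|\cdot\mathcal{L}_V(\rho(\pi^{-\lambda})\cdot\phi_w)$. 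Equating the two evaluations gives $|I\pi^{\lambda}I|\,\mathcal{L}_V(\rho(\pi^{-\lambda})\cdot\phi_w)=T_w\pi^\lambda\cdot v$, which is the assertion.

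The step I expect to require the most care is the identification $\eta(v_w\ast\pi^\lambda)=\int_{I\pi^{-\lambda}I}\rho(x)\cdot\phi_w\,dx$, and with it the left/right bookkeeping: $\phi_w$ is fixed under \emph{right} translation by $I$, the defining equivariance of $\mathcal{L}_V$ is for \emph{left} translation by $S$, and $\rho(\pi^{-\lambda})$ is itself a right translation, so the coweight enters the two sides of the final identity through an inverse, and it is $I\pi^{-\lambda}I$ that is relevant even though $\pi^\lambda=1_{I\pi^\lambda I}$ appears on the right. Pinning this down, together with the volume identity $|I\pi^{\lambda}I|=|I\pi^{-\lambda}I|$ and the coset count $|\Sigma|$, is precisely what makes the normalizing factor $1/|I\pi^{\lambda}I|$ come out correctly; everything else is a formal consequence of~(\ref{firstdiagram}) and the two hypotheses.
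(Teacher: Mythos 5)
Your argument is correct and follows the same route as the paper's proof: both establish, via the commutative diagram, that $\mathcal{L}_V(\phi_w\ast 1_{I\pi^\lambda I})=T_w\pi^\lambda\cdot v$, and then use the decomposition $I\pi^\lambda I=I\pi^\lambda(I\cap S)$ together with right $I$-invariance of $\phi_w$, $I\cap S$-triviality of $\sigma$, and $|I|=1$ to show $\mathcal{L}_V(\phi_w\ast 1_{I\pi^\lambda I})=|I\pi^\lambda I|\,\mathcal{L}_V(\rho(\pi^{-\lambda})\phi_w)$. You spell out the right-coset decomposition and the appearance of $I\pi^{-\lambda}I$ somewhat more explicitly than the paper does, but the underlying structure and every substantive step are the same.
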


\begin{remark} The above two assumptions hold for {\it all} dominant coweights in the spherical, Whittaker, and Bessel cases where $(S, \sigma)$ is equal to
$(G(\mathcal{O}), 1), (\bar{U}, \psi),$ and $(T_{\psi}(\mathcal{O}) \bar{U}_P, \psi_{a,b,c})$ (with notation as in (\ref{besselnormalization})), respectively.
In the Whittaker case the statement that, for any dominant $\lambda$,
$$ I \pi^\lambda I = I \pi^\lambda (I \cap \bar{U}) $$ 
is a consequence of the Iwahori factorization
$$ I = (I \cap B) \cdot (I \cap \bar{U}), $$
and the fact that for dominant $\lambda$,
$$ \pi^\lambda (I \cap U) \pi^{-\lambda} \subset I \cap U. $$
The statement for the Bessel case follows by a similar argument. We note that in the spherical and Bessel cases, the set of coweights such that
 (\ref{intersectionassumption}) holds
is larger than the set of dominant coweights. However, the subsequent proof requires that the action by $\pi^\lambda$ on $\mathcal{M}$ agrees 
with right convolution by the characteristic function $1_{I \pi^\lambda I}$, which is only true for dominant coweights $\lambda$. For non-dominant
coweights $\lambda$ for which~(\ref{intersectionassumption}) holds, one could rewrite the characteristic function $1_{I \pi^\lambda I}$ in terms 
of a basis in $T_w$ and $\pi^\mu$ for $\mathcal{H}$
to obtain an operator expression for the image of the Iwahori fixed vector evaluated at $\pi^{-\lambda}$.
\end{remark}

\begin{proof}  First note that for $\lambda$ dominant, the action of $\pi^\lambda$ is convolution by the characteristic function $1_{I \pi^\lambda I}$, so 
$$ \phi_w \ast 1_{I \pi^\lambda I} = \eta(v_1 \ast T_w \pi^\lambda) = \eta (T_w \pi^\lambda \cdot v_1). $$
Hence commutativity of the diagram (\ref{firstdiagram}) implies that
$$ \mathcal{L}_V (\phi_w \ast \pi^\lambda) = T_w \pi^\lambda \cdot v. $$
Thus it suffices to show that for $\lambda$ dominant,
$$ \mathcal{L}_V (\rho(\pi^{-\lambda}) \cdot \phi_w) = \frac{1}{| I \pi^\lambda I |} \mathcal{L}_V (\phi_w \ast 1_{I \pi^\lambda I}). $$
But
$$ \mathcal{L}_V (\rho(\pi^{-\lambda}) \cdot \phi_w) = \mathcal{L}_V (\eta(1_{T(\mathcal{O}) N w I \pi^\lambda})) = \frac{1}{| I \pi^\lambda I |} \mathcal{L}_V (\eta(v_w \ast 1_{I \pi^\lambda I})). $$
Here the first identity follows from our definitions, while the second uses the right $I$-invariance of $v_w$, the assumption that $I \pi^\lambda I = I \pi^\lambda (I \cap S)$ for $\lambda$ dominant, and the assumption that $\mathcal{L}_V$ is invariant under $I \cap S$.
\end{proof}

In particular, for the spherical vector $\phi^\circ$ defined in (\ref{sphericalvectordef}), we emphasize the following immediate consequence.

\begin{corollary} With assumptions on $\mathcal{L}_V$ and a dominant coweight $\lambda$ as in the previous proposition,
$$ \mathcal{L}_V (\rho(\pi^{-\lambda}) \cdot \phi^\circ) = \frac{1}{| I \pi^\lambda I |} \sum_{w \in W} T_w  \pi^{\lambda} \cdot v. $$
\label{evalcorollary}
\end{corollary}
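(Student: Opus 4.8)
The plan is to deduce the corollary directly from Proposition~\ref{toruseval} by linearity, using that the spherical vector was defined in~\eqref{sphericalvectordef} as the finite sum $\phi^\circ = \sum_{w \in W} \phi_w$. Both the right translation operator $\rho(\pi^{-\lambda})$ on $i_B^G(\chi_{\text{univ}}^{-1})$ and the functional $\mathcal{L}_V$ are linear, so
$$ \mathcal{L}_V(\rho(\pi^{-\lambda}) \cdot \phi^\circ) = \sum_{w \in W} \mathcal{L}_V(\rho(\pi^{-\lambda}) \cdot \phi_w). $$
First I would note that the hypotheses of Proposition~\ref{toruseval} — that $\sigma|_{I \cap S}$ is trivial and that $I \pi^\lambda I = I \pi^\lambda (I \cap S)$ — depend only on the data $(S,\sigma)$, the Iwahori $I$, and the fixed dominant coweight $\lambda$, and not on the choice of $w \in W$. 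Hence the proposition applies verbatim to each summand $\phi_w$, giving $\mathcal{L}_V(\rho(\pi^{-\lambda}) \cdot \phi_w) = \frac{1}{|I \pi^\lambda I|}\, T_w \pi^\lambda \cdot v$; summing over $w \in W$ and pulling the common scalar $|I\pi^\lambda I|^{-1} \in \mathbb{C}$ out of the sum yields the stated identity.

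There is essentially no obstacle at this stage: all the real content has already been carried out in the proof of Proposition~\ref{toruseval} — the identification of $\rho(\pi^{-\lambda}) \cdot \phi_w$ with the normalized right convolution $|I\pi^\lambda I|^{-1}\,\eta(v_w \ast 1_{I\pi^\lambda I})$, the use of the factorization $I\pi^\lambda I = I\pi^\lambda(I\cap S)$ together with the triviality of $\sigma$ on $I\cap S$, and the fact that for \emph{dominant} $\lambda$ convolution by $1_{I\pi^\lambda I}$ implements the action of $\pi^\lambda \in R$ on $\mathcal{M}$. If one wished to avoid invoking the proposition termwise, one could instead run the argument in a single step: commutativity of the diagram~\eqref{firstdiagram} applied to $v^\circ \ast \pi^\lambda = \sum_{w\in W} v_w \ast \pi^\lambda$ gives $\mathcal{L}_V(\phi^\circ \ast \pi^\lambda) = \bigl(\sum_{w\in W} T_w\bigr)\pi^\lambda \cdot v$, after which the same normalization factor relating $\rho(\pi^{-\lambda})$ to right convolution with $1_{I\pi^\lambda I}$ — established exactly as in the proposition's proof — produces the formula. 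Either route reduces the corollary to what has already been proved, so I expect the writeup to be a short two-line deduction.
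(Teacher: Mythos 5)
Your proof is correct and matches the paper's intent: the paper gives no argument for this corollary, explicitly labeling it an "immediate consequence" of Proposition~\ref{toruseval}, and your termwise application of that proposition to the finite sum $\phi^\circ = \sum_{w\in W}\phi_w$, followed by linearity of $\mathcal{L}_V$ and $\rho(\pi^{-\lambda})$, is exactly the deduction being invoked. The alternate one-step route you sketch is also sound, but the termwise version is the cleanest realization of what "immediate" means here.
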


In the special case where $V = V_\varepsilon := \text{Ind}_{\mathcal{H}_0}^{\mathcal{H}}(\varepsilon)$ with $\varepsilon$ a character of $\mathcal{H}_0$,
the vector space $V_\varepsilon$ is isomorphic to $R = \mathbb{C}[P^\vee]$. Thus we may take $v$ in the definition of $\mathcal{F}_V$ to be an element
of $R$ and $\pi^\lambda$ acts on $v$ by translation. The right-hand side of the above corollary may now be viewed as an operator on $\text{Frac}(R)$, the fraction field of the ring $R := \mathbb{C}[P^\vee]$,
and in the next section, we show that this operator has a particularly nice evaluation. Moreover, Proposition~\ref{twoincarnations} follows from the corollary
in this special case, but we postpone its proof until the end of the next section.

\section{Proof of Theorem~\ref{alternatorformula}\label{sphericalevalproof}}

We will prove an identity of operators on $\text{Frac}(R)$ 
which will lead almost immediately to a proof of Theorem~\ref{alternatorformula}.
To state this identity, we first define the relevant operators on $\text{Frac}(R)$. 

Recall that for any character $\varepsilon$ of $\mathcal{H}_0$, we 
may partition the set of positive roots $\Phi^+$ into $\Phi^+_{-1}(\varepsilon)$, those roots
equal in length to simple roots $\alpha$ for which $T_{s_\alpha}$ acts by -1, and $\Phi^+_{q}(\varepsilon)$, the corresponding
set of positive roots for the eigenvalue $q$. Set $\rho^\vee_\varepsilon$ to be the half sum of coroots in $\Phi^+_{-1}(\varepsilon)$
as in (\ref{generalrhochicken}).

\begin{lemma}
  We have
  \begin{equation}
    \label{whittakerroot} \pi^{\rho^\vee_\varepsilon} s_i \pi^{-\rho^\vee_\varepsilon} =
    \left\{ \begin{array}{ll}
      \pi^{\alpha_i^\vee} & \text{if $\alpha_i \in \Phi^+_{-1}(\varepsilon)$,}\\
      1 & \text{otherwise.}
    \end{array} \right.
  \end{equation}
  In particular, the vector $\rho^\vee_\varepsilon$ is in $P^\vee$.
\label{intheweightlattice}
\end{lemma}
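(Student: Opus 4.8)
The plan is to prove the displayed identity \eqref{whittakerroot} by a direct computation in the extended affine Weyl group, from which membership of $\rho^\vee_\varepsilon$ in $P^\vee$ follows formally. First I would unwind the meaning of the left-hand side: $\pi^{\rho^\vee_\varepsilon} s_i \pi^{-\rho^\vee_\varepsilon}$ is an element of the group algebra acting as an $R$-linear operator on $\operatorname{Frac}(R)$, and conjugating the reflection $s_i$ by the translation $\pi^{\rho^\vee_\varepsilon}$ gives, for any $f \in \operatorname{Frac}(R)$, the formula $\pi^{\rho^\vee_\varepsilon} s_i \pi^{-\rho^\vee_\varepsilon} \cdot f = \pi^{\rho^\vee_\varepsilon - s_i(\rho^\vee_\varepsilon)} \cdot f^{s_i}$. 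So the claim reduces to the purely combinatorial assertion
$$ \rho^\vee_\varepsilon - s_i(\rho^\vee_\varepsilon) = \begin{cases} \alpha_i^\vee & \text{if } \alpha_i \in \Phi^+_{-1}(\varepsilon), \\ 0 & \text{otherwise,} \end{cases} $$
together with the observation that this forces $\rho^\vee_\varepsilon \in P^\vee$: the difference $\rho^\vee_\varepsilon - s_i(\rho^\vee_\varepsilon)$ is always an integer multiple of $\alpha_i^\vee$ (it equals $\langle \rho^\vee_\varepsilon, \alpha_i \rangle \alpha_i^\vee$), and the identity above says that integer is $0$ or $1$, hence $\langle \rho^\vee_\varepsilon, \alpha_i \rangle \in \mathbb{Z}$ for every simple root $\alpha_i$, which is exactly the condition to lie in $P^\vee$.

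The combinatorial identity is then handled by the classical fact that a simple reflection $s_i$ permutes $\Phi^+ \setminus \{\alpha_i\}$ while sending $\alpha_i \mapsto -\alpha_i$; the same statement holds for coroots with $\alpha_i^\vee$. The key point is that the partition $\Phi^+ = \Phi^+_{-1}(\varepsilon) \sqcup \Phi^+_q(\varepsilon)$ is defined purely by root length, and $s_i$ preserves root lengths, so $s_i$ preserves each block $\Phi^+_{-1}(\varepsilon)$ and $\Phi^+_q(\varepsilon)$ setwise once we remove $\alpha_i$ from whichever block contains it. Thus, if $\alpha_i \in \Phi^+_{-1}(\varepsilon)$, then $s_i$ permutes $\Phi^+_{-1}(\varepsilon) \setminus \{\alpha_i\}$ and sends $\alpha_i^\vee \mapsto -\alpha_i^\vee$, so $2 s_i(\rho^\vee_\varepsilon) = \sum_{\beta \in \Phi^+_{-1}(\varepsilon)} s_i(\beta^\vee) = 2\rho^\vee_\varepsilon - 2\alpha_i^\vee$, giving the first case; if instead $\alpha_i \in \Phi^+_q(\varepsilon)$, then $\alpha_i^\vee$ does not appear among the coroots summed in $\rho^\vee_\varepsilon$ and $s_i$ permutes $\Phi^+_{-1}(\varepsilon)$ outright, so $s_i(\rho^\vee_\varepsilon) = \rho^\vee_\varepsilon$, giving the second case.

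I do not anticipate a serious obstacle here; the only subtlety worth spelling out carefully is why $s_i$ preserves the set $\Phi^+_{-1}(\varepsilon)$ rather than merely the set of all long roots or all short roots --- this is immediate because in a root system of a fixed type there are at most two root lengths, the partition by $\Phi^+_{-1}$ versus $\Phi^+_q$ coincides with the partition by these two lengths (this uses that long simple roots and short simple roots are each a single $W$-orbit's worth of lengths, and $\varepsilon$ is constant on each length class by the braid relations), and $s_i \in W$ acts by an isometry so cannot change the length of a root. Once that is noted, the two cases above are one-line computations, and the conclusion $\rho^\vee_\varepsilon \in P^\vee$ is immediate from the integrality of $\langle \rho^\vee_\varepsilon, \alpha_i\rangle$ as explained above.
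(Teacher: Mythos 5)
Your proposal is correct and follows the same route as the paper: both reduce the identity to computing $\rho^\vee_\varepsilon - s_i(\rho^\vee_\varepsilon)$ via the fact that $s_i$ negates $\alpha_i$ and permutes the remaining positive roots (of each length) among themselves, and both deduce $\rho^\vee_\varepsilon \in P^\vee$ from the resulting integrality of $\langle \alpha_i, \rho^\vee_\varepsilon\rangle$. Your version spells out the length-preservation point and the conjugation $\pi^{\rho^\vee_\varepsilon} s_i \pi^{-\rho^\vee_\varepsilon} = \pi^{\rho^\vee_\varepsilon - s_i(\rho^\vee_\varepsilon)} s_i$ a bit more explicitly than the paper does, but the argument is the same.
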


\begin{proof}
  A priori, $\rho^\vee_\varepsilon$ is in $\frac{1}{2} P^\vee \subset \mathbbm{R}
  \otimes P^\vee$. Now $\lambda \in \mathbbm{R} \otimes P^\vee$ is in
  $P^\vee$ if and only if $\left\langle \alpha_i, \lambda
  \right\rangle \in \mathbbm{Z}$ for all simple roots $\alpha_i$.
  Since $s_i$ maps $\alpha_i$ to its negative and permutes the remaining
  positive roots, $s_i (\rho^\vee_\varepsilon) = \rho^\vee_\varepsilon - \alpha_i^\vee$ if $\alpha_i$ is
  in $\Phi^+_{-1}(\varepsilon)$, while $s_i (\rho^\vee_\varepsilon) = \rho^\vee_\varepsilon$ if $\alpha_i$ is
  not in $\Phi^+_{-1}(\varepsilon)$.
  Therefore
  \[ \left\langle \alpha_i, \rho^\vee_\varepsilon \right\rangle = \left\{
     \begin{array}{ll}
       1 & \text{if $\alpha_i$ is in $\Phi^+_{-1}(\varepsilon)$,}\\
       0 & \text{otherwise.}
     \end{array} \right. \]
  This also implies (\ref{whittakerroot}).
\end{proof}

Now to any character $\varepsilon$ of $\mathcal{H}_0$ define the operator $\mathfrak{T}_{s_i}^{(\varepsilon)} := \mathfrak{T}_{i}$ on $\text{Frac}(R)$ by
\begin{equation} \mathfrak{T}_{i} := \pi^{\rho^\vee_\varepsilon} T_{s_i} \pi^{-\rho^\vee_\varepsilon}, \label{frakturt} \end{equation}
where $\pi^{\rho^\vee_\varepsilon}$ acts by translation and $T_{s_i}$ is the left action in $\text{Ind}_{\mathcal{H}_0}^{\mathcal{H}}(\varepsilon)$ given
in (\ref{signaction}). It is well-defined by the previous lemma.

More explicitly, to each simple reflection $s_i$ and for any $f \in \text{Frac}(R)$,
$$ \mathfrak{T}_i : f \longmapsto \frac{1-q}{\pi^{-\alpha^\vee} - 1} f + \frac{1}{\pi^{-\alpha^\vee}-1} \times \begin{cases} (q \pi^{\alpha^\vee}-1) f^{s_i} & \text{if $\alpha_i \in \Phi^+_{-1}(\varepsilon)$,}\\
   (q \pi^{-\alpha^\vee} -1) f^{s_i} & \text{if $\alpha_i \in \Phi^+_{q}(\varepsilon)$.} \end{cases} $$
 We will find it advantageous to express it in terms of the associated Demazure operator
\begin{equation}
  \label{demazureop} \partial_i = (\pi^{-\alpha_i^\vee} - 1)^{- 1}
  (\pi^{-\alpha_i^\vee} - s_i),
\end{equation}
and then it may be easily checked that
\begin{equation}
  \label{deeform} 1 +\mathfrak{T}_i = \left\{ \begin{array}{ll}
    (1 - q \pi^{\alpha_i^\vee}) \partial_i & \text{if $\alpha_i \in \Phi^+_{-1}(\varepsilon)$,}\\
    \partial_i (1 - q \pi^{\alpha_i^\vee}) & \text{if $\alpha_i \in \Phi^+_{q}(\varepsilon)$.}
  \end{array} \right.
\end{equation}

We may further define $\mathfrak{T}_w =\mathfrak{T}_{i_1} \cdots
\mathfrak{T}_{i_k}$ where $w = s_{i_1} \cdots s_{i_k}$ is a reduced
decomposition of $w \in W$. This is well-defined since the $\mathfrak{T}_i$
are defined in terms of the left action in an $\mathcal{H}$ module.

Finally define the operator $\Omega$ on $\text{Frac}(R)$ by
\[ \Omega := \pi^{-\rho^\vee} \prod_{\alpha \in \Phi^+} (1 - \pi^{-\alpha^\vee})^{- 1} \sum_{w \in W} w \pi^{-\rho^\vee} \]
where $\pi^\mu$ with $\mu \in P^\vee$ acts by translation and $w \in W$
acts on $P^\vee$ as usual.

\begin{theorem}
  As operators on $\text{Frac}(R)$,
  \[ \sum_{w \in W} \mathfrak{T}_w = \left[ \prod_{\alpha \in \Phi^+_{-1}(\varepsilon)} (1 -
     q \pi^{\alpha^\vee}) \right] \Omega \left[ \prod_{\alpha \in
     \Phi^+_q(\varepsilon)} (1 - q \pi^{\alpha^\vee}) \right] . \]
\label{operatoridentthm} \end{theorem}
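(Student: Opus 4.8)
The plan is to prove the identity by induction on the order of the Weyl group, organized combinatorially so that the key mechanism is the deformation of the braid-like relation between Demazure operators. The starting point is the factorization \eqref{deeform}, which rewrites each $1+\mathfrak{T}_i$ as a conjugate (or twist) of the Demazure operator $\partial_i$ by the factor $1-q\pi^{\alpha_i^\vee}$, with the side of the twist determined by whether $\alpha_i$ lies in $\Phi^+_{-1}(\varepsilon)$ or $\Phi^+_q(\varepsilon)$. The first step is to expand $\sum_{w\in W}\mathfrak{T}_w$ using the identity $\sum_{w\in W} x_w = \prod_{i}(1+y_i)$-type telescoping familiar from the theory of Demazure operators: concretely, one shows $\sum_{w\in W}\mathfrak{T}_w = (1+\mathfrak{T}_{i_1})\cdots(1+\mathfrak{T}_{i_N})$ for a reduced word $s_{i_1}\cdots s_{i_N}$ of the long element $w_0$, using that $\mathfrak{T}_i(1+\mathfrak{T}_i)=0$ (the deformed analogue of $\partial_i^2=\partial_i$, equivalently the quadratic relation \eqref{q-relation} transported through \eqref{frakturt}) together with the braid relations among the $\mathfrak{T}_i$.

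The second step is the corresponding statement for the undeformed operators: $\sum_{w\in W}\partial_w = (1+\partial_{i_1}')\cdots$, or more usefully the classical identity that $\prod_{\alpha\in\Phi^+}(1-\pi^{-\alpha^\vee})^{-1}\sum_{w\in W}\partial_w$ equals the Weyl-denominator-normalized alternator; this is precisely the equality between the Demazure character formula and the Weyl character formula (cf.~\cite{andersen}), and it is what identifies $\sum_{w\in W}\partial_w$ with $\Omega$ up to the boundary factors $\pi^{\pm\rho^\vee}$. One must check that the operator $\Omega$ as defined, with its two flanking $\pi^{-\rho^\vee}$ factors and the full-denominator prefactor, is exactly $\pi^{-\rho^\vee}\big(\prod_{\alpha\in\Phi^+}(1-\pi^{-\alpha^\vee})^{-1}\sum_{w\in W} w\big)\pi^{-\rho^\vee}$, i.e.\ the symmetrizer $\pi^{-\rho^\vee}(\sum_w w)\pi^{\rho^\vee}$ normalized by the Weyl denominator, and recognize this as $\sum_{w\in W}\partial_w$ by the standard formula $\partial_w = \pi^{-\rho^\vee}\circ(\text{alternator piece})\circ\pi^{\rho^\vee}$ for the composite Demazure operator.

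The third and central step is to reconcile the two expansions. Writing $\sum_{w\in W}\mathfrak{T}_w = \prod_{k}(1+\mathfrak{T}_{i_k})$ and substituting \eqref{deeform}, each factor contributes either $(1-q\pi^{\alpha_{i_k}^\vee})\partial_{i_k}$ or $\partial_{i_k}(1-q\pi^{\alpha_{i_k}^\vee})$. The heart of the argument is a ``commutation/collection'' lemma: one must move all the $-1$-type twisting factors to the far left and all the $q$-type factors to the far right, past the intervening $\partial$'s, using $\partial_i$'s intertwining relation $\partial_i\,\pi^{\mu} = \pi^{s_i\mu}\partial_i + (\text{correction})$ and tracking how a reduced word for $w_0$ realizes each positive root exactly once as $s_{i_1}\cdots s_{i_{k-1}}(\alpha_{i_k})$. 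After collection, the leftover left factor is $\prod_{\alpha\in\Phi^+_{-1}(\varepsilon)}(1-q\pi^{\alpha^\vee})$, the leftover right factor is $\prod_{\alpha\in\Phi^+_q(\varepsilon)}(1-q\pi^{\alpha^\vee})$, and the middle is $\sum_{w}\partial_w = \Omega$, yielding the claim.

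I expect the main obstacle to be precisely this third step: showing that the twisting factors, which appear attached to \emph{simple} roots in a particular reduced word, can be reassembled into clean products over \emph{all} positive roots of a given length class, without leaving residual correction terms. This requires being careful that the corrections generated when commuting $\partial_{i_k}$ past a factor $1-q\pi^{\alpha^\vee}$ telescope or cancel — the natural way to see this is to argue not term-by-term but to verify the identity after applying both sides to a general monomial $\pi^\lambda$, reducing to a rational-function identity in $\mathbb{C}(P^\vee)$ that can be checked by symmetry under $W$ and a comparison of residues along the hyperplanes $\pi^{\alpha^\vee}=1$. A secondary, more routine obstacle is bookkeeping the $\pi^{\pm\rho^\vee}$ and $\pi^{\pm\rho^\vee_\varepsilon}$ shifts so that the normalization in $\Omega$ matches Theorem~\ref{alternatorformula} on the nose; Lemma~\ref{intheweightlattice} is what guarantees these shifts stay within $P^\vee$ and the operators remain well-defined throughout.
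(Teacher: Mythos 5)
The first step of your argument contains a fatal error. Since $\mathfrak{T}_i = \pi^{\rho_\varepsilon^\vee} T_{s_i} \pi^{-\rho_\varepsilon^\vee}$ is a conjugate of $T_{s_i}$ acting in $\text{Ind}_{\mathcal{H}_0}^{\mathcal{H}}(\varepsilon)$, it inherits the quadratic relation $(\mathfrak{T}_i - q)(\mathfrak{T}_i + 1) = 0$ from \eqref{q-relation}. This gives $\mathfrak{T}_i(1+\mathfrak{T}_i) = q(1+\mathfrak{T}_i)$, not $\mathfrak{T}_i(1+\mathfrak{T}_i) = 0$; the vanishing you want holds only at $q=0$, where $\mathfrak{T}_i^{(q=0)} = \partial_i - 1$. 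Consequently the claimed factorization $\sum_{w\in W}\mathfrak{T}_w = (1+\mathfrak{T}_{i_1})\cdots(1+\mathfrak{T}_{i_N})$ over a reduced word for $w_0$ is false for generic $q$: already in rank $2$ (type $A_2$) one computes $(1+\mathfrak{T}_1)(1+\mathfrak{T}_2)(1+\mathfrak{T}_1) = (1+q) + (1+q)\mathfrak{T}_1 + \mathfrak{T}_2 + \mathfrak{T}_{s_1s_2} + \mathfrak{T}_{s_2s_1} + \mathfrak{T}_{w_0}$, which equals $\sum_w\mathfrak{T}_w$ only when $q=0$. Without that product expansion there is nothing to collect in your third step. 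Your second step also misidentifies the undeformed object: it is the single composite $\partial_{w_0}$, not $\sum_{w\in W}\partial_w$, that equals $\Omega$ (for $W=S_2$, $\sum_w\partial_w = 1+\partial_s \neq \partial_s = \partial_{w_0}$); what does collapse to $\partial_{w_0}$ at $q=0$ is $\sum_w\mathfrak{T}_w^{(q=0)}$, a different sum.

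The paper avoids your third-step commutation problem entirely, and this is precisely the point. What is true, and what the proof exploits, is the much weaker one-factor peeling $\sum_{w\in W}\mathfrak{T}_w = (1+\mathfrak{T}_i)\sum_{s_iw>w}\mathfrak{T}_w = \bigl(\sum_{ws_i>w}\mathfrak{T}_w\bigr)(1+\mathfrak{T}_i)$ for each $i$ separately. Combined with \eqref{deeform} and the relations $s_i\partial_i = \partial_i$, $\partial_i s_i = -\partial_i\pi^{\alpha_i^\vee}$, this yields left $W$-invariance $s_i\Omega' = \Omega'$ and right anti-invariance $\Omega' s_i = -\Omega'\pi^{\alpha_i^\vee}$ for $\Omega' := D_{(-1)}^{-1}\bigl(\sum_w\mathfrak{T}_w\bigr)D_{(q)}^{-1}$. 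These symmetries force $\Omega' = \sum_w w\phi$ for a single $\phi$, the peeling identities constrain the possible denominators of $\phi$, a degree count in $q$ shows $\phi$ is a constant in $q$, and specializing to $q=0$ (where $D_{(\pm)}=1$, $\sum_w\mathfrak{T}_w^{(q=0)} = \partial_{w_0}$, and Demazure/Weyl gives $\partial_{w_0} = \Omega$) pins down $\phi$. Your instinct to fall back on a symmetry-and-residues argument is the right one, and it is essentially the route the paper takes; but it replaces, rather than repairs, the explicit collection argument your write-up centers on.
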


\begin{proof}
  Let
  \[ \Omega' := D_{(-1)}^{- 1} \left( \sum_{w \in W} \mathfrak{T}_w \right) \, D_{(q)}^{- 1} \]
  where we will denote
  \[ D_{(-1)} = \prod_{\alpha \in \Phi^+_{-1}(\varepsilon)} (1 -  q \pi^{\alpha^\vee}),
     \quad \text{and} \quad D_{(q)} = \prod_{\alpha \in \Phi^+_{q}(\varepsilon)} (1 - q \pi^{
     \alpha^\vee}). \]
  We will show eventually that $\Omega' = \Omega$. The first step will be to
  show that
  \begin{equation}
    \label{leftomega} s_i \Omega' = \Omega' .
  \end{equation}
  We begin by writing
  \[ \Omega' = D_{(-1)}^{- 1} (1 +\mathfrak{T}_i) \left(
     \sum_{\substack{
       w \in W\\
       s_i w > w
     }} \mathfrak{T}_w \right)_{} D_{(q)}^{- 1} . \]
          
  First suppose that $\alpha_i$ is in $\Phi_{-1}^{+}(\varepsilon)$. Then by (\ref{deeform}) we have
  \begin{equation}
    \label{whitspecial} \Omega' = \left[ \prod_{\substack{
      \alpha \in \Phi^+_{-1}(\varepsilon) \\
      \alpha \neq \alpha_i
    }} (1 - q \pi^{\alpha^\vee})^{- 1} \right] \partial_i
    \left( \sum_{\substack{
      w \in W\\
      s_i w > w
   }} \mathfrak{T}_w \right)_{} D_{(q)}^{- 1} .
  \end{equation}
    Applying $s_i$ on the left permutes the factors $(1 - q \pi^{
  \alpha^\vee})^{- 1}$ in $\Phi^+_{-1}(\varepsilon) -\{\alpha_i \}$ and since $s_i \partial_i =
  \partial_i$, we obtain (\ref{leftomega}). 
  
  On the other hand, suppose that
  $\alpha_i$ is in $\Phi^+_{q}(\varepsilon)$. Then (\ref{deeform}) gives
  \[ \Omega' = D_{(-1)}^{- 1} \partial_i (1 - q \pi^{\alpha_i^\vee})^{- 1}
     \left( \sum_{\substack{
       w \in W\\
       s_i w > w
     }} \mathfrak{T}_w \right) D_{(q)}^{- 1} . \]
  Now since $\alpha_i \notin \Phi^+_{-1}(\varepsilon)$ the reflection $s_i$ permutes the
  factors $(1 - q \pi^{\alpha^\vee})$ of $D_{(-1)}$ and the relation $s_i \partial_i =
  \partial_i$ gives (\ref{leftomega}) in this case too.
  
  We next prove that
  \begin{equation}
    \label{rightomega} \Omega' s_i = - \Omega' \pi^{\alpha_i^\vee} .
  \end{equation}
  We have
  \[ \Omega' = D_{(-1)}^{- 1} \left( \sum_{\substack{
       w \in W\\
       w s_i > w
    }} \mathfrak{T}_w \right)_{} (1 +\mathfrak{T}_i) D_{(q)}^{- 1} .
  \]
  Again, we handle this in cases. If $\alpha_i$ is in $\Phi^+_{q}(\varepsilon)$,
  \begin{equation}
    \label{sphericalspecial} \Omega' = D_{(-1)}^{- 1} \left(
    \sum_{\substack{
      w \in W\\
      w s_i > w
    }} \mathfrak{T}_w \right)_{} \partial_i \left[
    \prod_{\substack{
      \alpha \in \Phi^+_{q}(\varepsilon) \\
      \alpha \neq \alpha_i
    }} (1 - q \pi^{\alpha^\vee})^{- 1} \right],
  \end{equation}
  and right multiplying by $s_i$ permutes the factors on the right. Since
  $\partial_i s_i = - \partial_i \pi^{\alpha_i^\vee}$, we obtain
  (\ref{rightomega}) in this case. Finally if $\alpha_i$ is in $\Phi^+_{-1}(\varepsilon)$,
  \[ \Omega' = D_{(-1)}^{- 1} \left( \sum_{\substack{
       w \in W\\
       w s_i > w
     }} \mathfrak{T}_w \right)_{} (1 - q \pi^{\alpha_i^\vee})
     \partial_i D_{(q)}^{- 1} . \]
  Now $D_{(q)}^{- 1} s_i = s_i D_{(q)}^{- 1}$ since $s_i$ permutes the roots
  in $\Phi^+_{q}(\varepsilon)$
  and again (\ref{rightomega}) follows from $ \partial_i s_i = - \partial_i
  \pi^{\alpha_i^\vee}$.
  
  To finish the proof, we regard $q$ as an algebraic independent. We use the notation
  $\tilde{R} := \mathbb{C}[q][P^\vee]$, to emphasize this point of view. The ring $\tilde{R}$ is a 
  unique factorization domain and any element $h$ of the field of fractions $\text{Frac}(\tilde{R})$ 
  may be written as $f / g$ where $f, g$ are relatively prime. If some irreducible $p$ in $\text{Frac}(\tilde{R})$ does
  not divide $g$ we will say that $h$ {\em{does not have $p$ in its
  denominator}}.
  
  According to the definition of $\Omega'$, we may write 
  $$ \Omega' = \sum_{w \in W} w \phi_w $$ where the
  $\phi_w$ are elements of in the field $\text{Frac}(R)$.
  Then (\ref{leftomega}) implies that $\phi_{s_i w} = \phi_w$,
  so the coefficients $\phi_w$ are all equal; that is,
  \begin{equation}
    \label{leftrightuptight} \Omega' = \sum_{w \in W} w \phi = \sum_{w \in W}
    {^w \phi} w
  \end{equation}
  for some $\phi \in \text{Frac}(\tilde{R})$. Moreover,
  (\ref{rightomega}) implies that
  \begin{equation}
    \label{rightsiac} {^{s_i} \phi} = -\pi^{\alpha_i^\vee} \phi .
  \end{equation}

  We will argue that $\phi$ lies in the localization of $\tilde{R}$ generated by the elements
  of form $(1 - \pi^{-\alpha^\vee})$ with $\alpha \in \Phi$. Indeed the only additional denominators
  (coming from conjugates of $D_{(-1)}^{- 1}$ and $D_{(q)}^{- 1}$) are of the form $1 -
  q\pi^{\alpha^\vee}$ with $\alpha \in \Phi$. If $\alpha$ is in $\Phi^+_{-1}(\varepsilon)$, we rule this out as follows. Let $\alpha_i$ be a
  simple root in $\Phi^+_{-1}(\varepsilon)$.  
   It follows from (\ref{whitspecial}) and
  (\ref{leftrightuptight}) that $^w \phi$ does not have $1 - q \pi^{
  \alpha_i^\vee}$ in its denominator, for every $w \in W$. This in turn implies that $\phi$
  does not have $1 - q\pi^{w^{- 1} (\alpha_i^\vee)}$ in its denominator
  for any simple root $\alpha_i \in \Phi^+_{-1}(\varepsilon)$. This is enough since every
  root in $\Phi^+_{-1}(\varepsilon)$ is $w^{- 1} (\alpha_i)$ for some simple root
  $\alpha_i \in \Phi^+_{-1}(\varepsilon)$ and some $w \in W$. On the other hand, (\ref{sphericalspecial})
  and (\ref{leftrightuptight}) imply that if $\alpha_i$ is a simple
  root in $\Phi^+_q(\varepsilon)$, then $\phi$ does not have $1 - q\pi^{\alpha_i^\vee}$ in its
  denominator. Thus $^w \phi$ also does not have $1 - q\pi^{w
  (\alpha_i)}$ in its denominator for any $w \in W$, and by (\ref{rightsiac})
  it follows that $\phi$ also does not have $1 - q\pi^{w
  (\alpha_i)}$ in its denominator. Since every root in $\Phi^+_q(\varepsilon)$ is $w
  (\alpha_i)$ for some simple root $\alpha_i$ in $\Phi^+_q(\varepsilon)$, we have 
  shown that $\phi$ is in the desired localization of $\tilde{R}$.
  
  We will now argue that $\phi$ is independent of $q$. We know that the
  denominator of $\phi$ can be divisible by $1 -\pi^{-\alpha_i^\vee}$
  (which is independent of $q$) but no irreducibles of $\tilde{R}$ that
  involve $q$. So $\phi$ is a polynomial in $q$. The same is true immediately for
  the operator $\Theta := \sum_w \mathfrak{T}_w$ and by definition of the operator
  $\Omega' = D_{(-1)}^{- 1} \Theta \, D_{(q)}^{- 1}$, we have the equation
  \[ \sum_{w \in W} w (w^{- 1} D_{(-1)}) \phi D_{(q)} = \Theta . \]
  Let us write $\Theta = \sum_{w \in W} w m_w$ with $m_w$ again in the localization
  of $\tilde{R}$ generated by the $(1 - \pi^{-\alpha^\vee})$ with $\alpha \in \Phi$.
  As polynomials in $q$, it is clear that the summand in $\Theta = \sum
  \mathfrak{T}_w$ with maximal degree is $\mathfrak{T}_{w_0}$, and its degree in $q$
  is $| \Phi^+ |$, which is the degree of $D_{(-1)}$ plus the degree of $D_{(q)}$. So
  $\phi$ must have degree zero as a polynomial in $q$ and it is therefore
  independent of $q$.
  
  We may now evaluate $\phi$ (or equivalently compare $\Omega$ and $\Omega'$) by taking $q = 0$. In this case, both
  $D_{(-1)}$ and $D_{(q)}$ equal 1. The resulting Hecke operator $\mathfrak{T}_i^{(q = 0)}$ becomes
  the divided difference operator
  \[ \mathfrak{T}_i^{(q = 0)} = (\pi^{-\alpha_i^\vee} - 1)^{- 1} (1 - s_i) \]
  whether $\alpha_i$ is long or short. In particular, $\mathfrak{T}_i^{(q = 0)} + 1 = \partial_i$ with $\partial_i$ as defined
  in (\ref{demazureop}). Using the idempotent property of $\partial_i$, 
  $$ \sum_w \mathfrak{T}_w^{(q = 0)} = \partial_{w_0} $$
 But $\partial_{w_0}$ is closely related to the operator arising in the Demazure character formula (as in \cite{andersen}).
 Indeed $\partial_{w_0}(\pi^{w_0 \lambda})$ gives the highest weight character of highest weight $\lambda$.
  Thus the equality of the Weyl character formula and the Demazure character formula may be expressed as the operator identity on
  weights of the form $w_0 \lambda$ with $\lambda$ a dominant coweight:
  $$ \sum_{w \in W}  \mathfrak{T}_w^{(q=0)} = \pi^{-\rho^\vee} \prod_{\alpha \in \Phi^+} (\pi^{-\alpha^\vee} - 1)^{- 1} \sum_{w \in W} w \pi^{w_0 \rho^\vee} =: \Omega $$
Then by Lemma~11 of \cite{bbl}, the result holds for all elements of $\text{Frac}(R)$.
\end{proof}

\begin{proof}[Proof of Theorem~\ref{alternatorformula}] For any coweight $\lambda$, we may rewrite
$$ v^\circ \ast \pi^\lambda = \sum_{w \in W} (T_w \pi^\lambda) \cdot v_1, $$
and since $F_{V_\varepsilon} : h \longmapsto h \cdot \pi^{\rho_\varepsilon^\vee}$ intertwines $\mathcal{M} \simeq \mathcal{H}$ with $V_\varepsilon$,
\begin{equation} \mathcal{F}_{V_\varepsilon} (v^\circ \ast \pi^\lambda) = \sum_{w \in W} (T_w \pi^\lambda) \cdot \pi^{\rho^\vee_\varepsilon} = \pi^{-\rho^\vee_\varepsilon} \sum_{w \in W} \mathfrak{T}_w \pi^{\lambda+2\rho_\epsilon^\vee} \label{prettyobviousreally} \end{equation}
where the action of $T_w$ in $V_\varepsilon$ is built from the action on simple reflections given in (\ref{signaction}) and $\mathfrak{T}_w$ is built from the action by simple reflections as in (\ref{frakturt}) and appearing in the statement of Theorem~\ref{operatoridentthm}. Thus Theorem~\ref{alternatorformula} follows by applying the identity of operators in Theorem~\ref{operatoridentthm}.
\end{proof}

\begin{proof}[Proof of Proposition~\ref{twoincarnations}]
This is just a special case of Corollary~\ref{evalcorollary} with $V = V_\varepsilon$ and $v = \pi^{\rho^\vee_\varepsilon}$, as the right-hand side of the identity in the corollary matches 
$| I \pi^\lambda I |^{-1} \mathcal{F}_{V_\varepsilon} (v^\circ \ast \pi^\lambda)$ with $F_{V_\varepsilon} : h \longmapsto h \cdot \pi^{\rho_\varepsilon^\vee}$ according to (\ref{prettyobviousreally}).
\end{proof}

We conclude this section with several examples illustrating the ways in which Theorem~\ref{alternatorformula} and Proposition~\ref{twoincarnations} may be applied.

\begin{example} Let $\varepsilon$ be the sign character of the Hecke algebra for a split, reductive $p$-adic group $G$. By Theorem~\ref{whittakerthm}, our diagram (\ref{firstdiagram})
commutes with respect to $\varepsilon$ if $\mathcal{L}_{V_\varepsilon}$ is a suitably normalized Whittaker functional. Thus combining Theorem~\ref{alternatorformula} with Proposition~\ref{twoincarnations}, we have a formula for the spherical Whittaker function for dominant coweights $\lambda$:
$$ \mathcal{L}_{V_\varepsilon}(\rho(\pi^{-\lambda}) \cdot \phi^\circ) = \frac{\pi^{-2\rho^\vee}}{| I \pi^\lambda I |} \prod_{\alpha \in \Phi^+} \frac{1 - q \pi^{\alpha^\vee}}{1 - \pi^{-\alpha^\vee}} \mathcal{A} \left( \pi^{\lambda + \rho^\vee} \right) = \frac{q^{\ell(w_0)}}{| I \pi^\lambda I |} \pi^{\rho^\vee} \prod_{\alpha \in \Phi^+} (1 - q^{-1} \pi^{-\alpha^\vee}) \chi_\lambda $$
since $\Phi^+ = \Phi^+_{-1}(\varepsilon)$ and $\rho^\vee = \rho^\vee_{\varepsilon}$ for the sign character. Here $\mathcal{A}$ denotes the usual alternator $\sum_w (-1)^{\ell(w)} w$. This is the familiar Casselman-Shalika formula as given in \cite{casselman-shalika}. Compare the version given in Theorem 6.5.1 of \cite{hkp} recalling that their Whittaker model $\widetilde{\mathcal{W}}$ is related to our model $\mathcal{W}$ by
$\widetilde{\mathcal{W}} = q^{-\ell(w_0)} \pi^{-\rho^\vee} \mathcal{W}$. 
\label{whitexample}
\end{example}

\begin{example} Let $\varepsilon$ be the trivial character of a Hecke algebra for a split, reductive $p$-adic group $G$. By Theorem~\ref{sphericalthm}, our diagram (\ref{firstdiagram})
commutes with respect to $\varepsilon$ if $\mathcal{L}_{V_\varepsilon}$ is a suitably normalized spherical functional. Thus combining Theorem~\ref{alternatorformula} with Proposition~\ref{twoincarnations}, we have a formula for the Macdonald spherical function for dominant coweights $\lambda$:
\begin{align*} \mathcal{L}_{V_\varepsilon}(\rho(\pi^{-\lambda}) \cdot \phi^\circ) & = \frac{\pi^{-\rho^\vee}}{| I \pi^\lambda I |} \prod_{\alpha \in \Phi^+} \frac{1}{1 - \pi^{-\alpha^\vee}} \mathcal{A} \left( \pi^{\lambda - \rho^\vee} \prod_{\alpha \in \Phi^+} (1 - q \pi^{\alpha^\vee}) \right) \\
& = \frac{1}{| I \pi^\lambda I |} \sum_{w \in W} w \left( \pi^{\lambda} \prod_{\alpha \in \Phi^+} \frac{1 - q \pi^{\alpha^\vee}}{1 - \pi^{\alpha^\vee}} \right) \end{align*}
since $\Phi^+ = \Phi^+_{q}(\varepsilon)$ and $\rho^\vee_{\varepsilon} = 1$ for the trivial character $\varepsilon$. This agrees with Macdonald's formula \cite{macdonald}. Compare the version given in Theorem 5.5.1 of \cite{hkp} recalling that their normalization of the spherical model differs from ours by a factor of $q^{\ell(w_0)}$.
\label{spherexample}
\end{example}

As we noted in the introduction, we may apply the result of Theorem~\ref{alternatorformula} even without a realization as an integration over a subgroup $S$ of $G$. 

\begin{example} If $G = SO_{2n+1}$, we may apply Theorem~\ref{alternatorformula} to the only remaining linear character of the Hecke algebra of left unaccounted for by Theorems~\ref{sphericalthm}--\ref{besselthm}. Let $\varepsilon$ be the character of  which $T_{s_\alpha}$ acts by $-1$ on short simple roots $\alpha$ and by $q$ on long simple roots $\alpha$.
Then the value of $\mathcal{F}_{V_\varepsilon}(v^\circ \ast \pi^\lambda)$ for any dominant coweight $\lambda$ agrees (up to a fixed element of $R$) with the evaluation of spherical Shalika function at dominant coweights as given in equation (78) of \cite{sakellaridis}. Indeed, $\rho^\vee_\varepsilon$ is the half sum of short roots and so by Theorem~\ref{alternatorformula}:
\begin{align*} \mathcal{F}_{V_\varepsilon}(v^\circ \ast \pi^\lambda) & = \frac{\pi^{-\rho^\vee_\varepsilon} \prod_{\alpha \in \Phi^+_{-1}(\varepsilon)} (1 - q \pi^{\alpha^\vee})}{\pi^{\rho^\vee} \prod_{\alpha \in \Phi^+} 1 - \pi^{-\alpha^\vee}} \mathcal{A} \left( \pi^{\lambda + 2\rho_\varepsilon^\vee - \rho^\vee} \prod_{\alpha \in \Phi^+_q(\varepsilon)} (1 - q \pi^{\alpha^\vee}) \right) \\
& = \frac{q^{\text{\# long roots}}\pi^{-\rho^\vee_\varepsilon} \prod_{\alpha \in \Phi^+_{-1}(\varepsilon)} (1 - q \pi^{\alpha^\vee})}{\pi^{\rho^\vee} \prod_{\alpha \in \Phi^+} 1 - \pi^{-\alpha^\vee}} \mathcal{A} \left( \pi^{\lambda + \rho^\vee} \prod_{\alpha \in \Phi^+_{\text{long}}} (1 - q^{-1} \pi^{-\alpha^\vee}) \right).
\end{align*}

While Theorem~2.1 is the main theorem of \cite{sakellaridis}, Sakellaridis provides an alternate normalization of the spherical Shalika function in equation (78) that is more in keeping with our conventions. His alternator contains a product over short, positive symplectic roots, and hence is equivalent to our product over long, positive orthogonal roots. 
\label{shalikaexample}
\end{example}

\bibliographystyle{acm}
\bibliography{matrix-coefficients}

\end{document}